\numberwithin{equation}{section}
\theoremstyle{plain}
\newtheorem{thm}{Theorem}[section]
\newtheorem{prop}{Proposition}[section]
\newtheorem{lem}{Lemma}[section]
\theoremstyle{definition}
\newtheorem{rem}{Remark}[section]
\begin{document}


\title{On the average hitting times of the squares of cycles}
\author{Yoshiaki Doi\thanks{Sakata City Daini Junior High School, Shinbashi, Sakata 998--0864, Japan, y.doi6532@gmail.com
}, 
Norio Konno\thanks{Department of Applied Mathematics, Faculty of Engineering, Yokohama National University, Hodogaya, Yokohama, 240-8501, Japan, konno-norio-bt@ynu.ac.jp
},
Tomoki Nakamigawa\thanks{Department of Information Science
Shonan Institute of Technology, Tsujido-Nishikaigan, Fujisawa, 251-8511, Japan, nakami@info.shonan-it.ac.jp
}, 
Tadashi Sakuma\thanks{Faculty of Science, 
Yamagata University, Kojirakawa, Yamagata, 990--8560, Japan, sakuma@sci.kj.yamagata-u.ac.jp
}, \\
Etsuo Segawa\thanks{Graduate School of Education Center and Graduate School of Environment Information Sciences, Yokohama National University, Hodogaya, Yokohama, 240-8501, Japan, segawa-etsuo-tb@ynu.ac.jp
},
Hidehiro Shinohara\thanks{Institute for Excellence in Higher Education, 
Tohoku University, Aoba, Sendai, 980-8576, Japan, shino.setsystem@gmail.com
},
Shunya Tamura\thanks{Graduate School of Science and Engineering, Yokohama National University, Hodogaya, Yokohama, 240-8501, Japan, tamura-shunya-kj@ynu.jp
}, 
Yuuho Tanaka\thanks{Graduate School of Information Sciences, Tohoku University Aoba, Sendai, 980-8579, Japan, tanaka.yuuho.t4@dc.tohoku.ac.jp
}, 
Kosuke Toyota\thanks{Graduate School of Science and Engineering (Science), 
Yamagata University, Kojirakawa, Yamagata, 990--8560, Japan, s201001m@st.yamagata-u.ac.jp
}. 
}
\date{}
\maketitle

\begin{abstract}
The exact formula for the average hitting time (HT, as an abbreviation) of simple random 
walks from one vertex to any other vertex on the square $C^2_N$ of an $N$-vertex
cycle graph $C_N$ was given by N. Chair [\textit{Journal of Statistical Physics},
\textbf{154} (2014) 1177-1190]. In that paper, the author gives the expression for
the even $N$ case and the expression for the odd $N$ case separately. In this paper,
by using an elementary method different from Chair (2014), we give a much simpler
single formula for the HT's of simple random walks on $C^2_N$. Our proof is considerably
short and fully combinatorial, in particular, has no-need of any spectral graph
theoretical arguments. Not only the formula itself but also intermediate results
through the process of our proof describe clear relations between the HT's of simple
random walks on $C^2_N$ and the Fibonacci numbers.
\end{abstract}
\noindent
Keywords: simple random walk, hitting time, the square of a cycle, Fibonacci number, Kirchhoff index

\section{Introduction}

A simple random walk on a graph $G$ is a discrete stochastic model
such that a random walker on a vertex $u\in V(G)$ moves to a vertex $v$
adjacent to the vertex $u$ at the next step with the probability of
$\frac{1}{\deg_G(u)}$, where $\deg_G(u)$ denotes the degree of the vertex $u$
of $G$. The number of steps required until the random walker starting from
a vertex $p$ of $G$ will first arrive at a vertex $q$ of $G$ is called
{\it the hitting time} from $p$ to $q$ of the simple random walk on $G$.
{\it The average hitting time} (HT, as an abbreviation) from $p$ to $q$ on $G$ 
which is denoted by $h(G;p,q)$, means the expected value of the hitting times
from $p$ to $q$ of simple random walks on $G$. The exact formula for
the average hitting time from one vertex to any other vertex is far from
available in general. For some very special graph classes with high symmetries,
it may be possible to obtain such exact formulas. 
For example, as far as the authors know, only in the case of $i\in\{1,2\}$,
exact formulas for the $i$-th power $C^i_N$ of an $N$-vertex cycle graph $C_N$
are obtained. Furthermore, it was not until 2014 that such a formula for
the case of $i=2$ was given by N. Chair \cite{Chair}.
In that paper, by calculating the exact value of Wu's formula \cite{Wu} of
the effective resistances in terms of the eigenvalues and the eigenvectors of 
the Laplacian matrices of $C^2_N$, the author gives the expression for the even
$N$ case and the expression for the odd $N$ case separately.
Let us define $V(C^2_{N}):= \mathbb{Z}_{N}$ and
$E(C^2_{N}):= \{ \{u,v\} \mid u-v=\pm1,\pm2 \}$. 
Let us denote by $h_N(k,l)$ the average hitting time for simple random walks
from the vertex $k$ to the vertex $l$ of $C^2_N$.
Note that $h_N(0,0)=0$.
Since the dihedral group $D_N$ acts on $C^2_N$, we have that 
$\forall k,l \in\mathbb{Z}_N,h_N(0,l)=h_N(k,k+l)=h_N(k+l,k)$.
Let $F_i$ denote the $i$-th Fibonacci number.
\begin{thm}[Chair, 2014 \cite{Chair}]
The exact formula for the HT's of simple random walks on $C^2_N(N\geq 5)$ is as follows:
\begin{enumerate}
\item For the case of $N=2n$,  
\[
h_N(0,l)=\frac{2}{5}l\left(N-l\right)+\left(-1\right)^{l+1}\frac{2N}{\sqrt{5}}F_l^2\left(\frac{1+\left(\frac{3-\sqrt{5}}{2}\right)^N}{1-\left(\frac{3-\sqrt{5}}{2}\right)^N}\right)+\left(-1\right)^l\frac{2N}{5}F_{2l}.
\]
\item For the case of $N=2n+1$, 
\[
h_N(0,l)=\frac{2}{5}l\left(N-l\right)+\left(-1\right)^{l+1}\frac{2N}{\sqrt{5}}F_l^2\left(\frac{1-\left(\frac{3-\sqrt{5}}{2}\right)^N}{1+(\frac{3-\sqrt{5}}{2})^N}\right)+\left(-1\right)^l\frac{2N}{5}F_{2l}.
\]
\end{enumerate}
\end{thm}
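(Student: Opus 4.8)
\noindent\emph{Proof proposal.} The plan is to work directly with the linear system that defines the hitting times, bypassing electrical networks and spectral graph theory entirely; the Fibonacci numbers will enter only through the characteristic roots of the resulting recurrence. Put $d(l):=h_N(l,0)$. Since the reflection $x\mapsto-x$ is an automorphism of $C^2_N$ fixing $0$, we have $d(l)=h_N(0,l)$, $d(l)=d(N-l)$, and $d(0)=0$, while conditioning on the walker's first move (each of the four neighbours chosen with probability $\tfrac14$) gives, for every $l\in\Z_N\setminus\{0\}$,
\[
d(l+2)+d(l+1)-4\,d(l)+d(l-1)+d(l-2)=-4\qquad(\text{indices mod }N).
\]
For $2\le l\le N-2$ this is an honest four-term recurrence among the consecutive values $d(l-2),\dots,d(l+2)$ on $\{0,\dots,N\}$ (with the convention $d(N)=d(0)$), while the cases $l=1$ and $l=N-1$ are ``wrap-around'' equations, and $l=N-1$ becomes identical to $l=1$ once the symmetry $d(l)=d(N-l)$ is invoked.

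Next I would solve the recurrence. Its characteristic polynomial $x^4+x^3-4x^2+x+1$ is palindromic; the substitution $y=x+x^{-1}$ turns it into $y^2+y-6=(y-2)(y+3)$, so it factors as $(x-1)^2(x^2+3x+1)$ with roots $1$ (double), $\alpha:=\tfrac{-3+\sqrt5}{2}=-\varphi^{-2}$, and $\alpha^{-1}=-\varphi^{2}$, where $\varphi=\tfrac{1+\sqrt5}{2}$; note $\alpha-\alpha^{-1}=\sqrt5$ and $1-\alpha^2=-\sqrt5\,\alpha$. A particular solution is $\tfrac25\,l(N-l)$ (the $Nl$-part contributes $0$ to the recurrence, the $-l^2$-part contributes $-4$), so the general solution on $\{0,\dots,N\}$ is $d(l)=\tfrac25\,l(N-l)+c_0+c_1l+c_2\alpha^{l}+c_3\alpha^{-l}$. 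Imposing $d(l)=d(N-l)$ forces $c_1=0$ and $c_3=c_2\alpha^{N}$; then $d(0)=0$ forces $c_0=-c_2(1+\alpha^{N})$; and the single remaining wrap-around equation, at $l=1$, fixes $c_2$. This last step is clean: the closed form automatically satisfies the ``free'' version of the $l=1$ equation (the one with $d(-1)$ in the last slot), so the genuine equation --- which carries $d(N-1)$ there instead --- holds if and only if $d(N-1)=d(-1)$ for the closed form, a single linear equation whose solution is $c_2=\dfrac{4N}{5\sqrt5\,(\alpha^{N}-1)}$. Substituting back collapses everything to the compact single formula
\[
h_N(0,l)=\tfrac25\,l(N-l)+\frac{4N\,(1-\alpha^{l})(1-\alpha^{N-l})}{5\sqrt5\,(1-\alpha^{N})}\qquad(0\le l\le N),
\]
and one checks directly that it solves all of the defining equations (those for $2\le l\le N-2$ by construction, $l=N-1$ by symmetry, $d(0)=0$ and $l=1$ by the choice of $c_0,c_2$), so by uniqueness of hitting times it is correct.

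Finally I would reconcile this with the even-$N$ and odd-$N$ expressions of the theorem above. From $\alpha=-\varphi^{-2}$ we get $\varphi^{2l}=(-1)^l\alpha^{-l}$ and $\varphi^{-2l}=(-1)^l\alpha^{l}$, hence, using $5F_l^2=\varphi^{2l}+\varphi^{-2l}-2(-1)^l$ and $\sqrt5\,F_{2l}=\varphi^{2l}-\varphi^{-2l}$,
\[
(-1)^{l+1}\,5F_l^{2}=2-\alpha^{l}-\alpha^{-l},\qquad (-1)^{l}\sqrt5\,F_{2l}=\alpha^{-l}-\alpha^{l}.
\]
Expanding $(1-\alpha^l)(1-\alpha^{N-l})=1-\alpha^l-\alpha^{N-l}+\alpha^N$ and combining these identities shows that the second summand of the compact formula equals $(-1)^{l+1}\tfrac{2N}{\sqrt5}F_l^{2}\cdot\tfrac{1+\alpha^{N}}{1-\alpha^{N}}+(-1)^{l}\tfrac{2N}{5}F_{2l}$. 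Since $\bigl(\tfrac{3-\sqrt5}{2}\bigr)^{N}=(-\alpha)^{N}=(-1)^{N}\alpha^{N}$, the quotient $\tfrac{1+(\frac{3-\sqrt5}{2})^{N}}{1-(\frac{3-\sqrt5}{2})^{N}}$ for even $N$ and the quotient $\tfrac{1-(\frac{3-\sqrt5}{2})^{N}}{1+(\frac{3-\sqrt5}{2})^{N}}$ for odd $N$ are \emph{both} equal to $\tfrac{1+\alpha^{N}}{1-\alpha^{N}}$; this is exactly why Chair's two cases merge, and it recovers both of them from our single formula. (Incidentally $\tfrac{1+\alpha^{N}}{1-\alpha^{N}}=L_N/(\sqrt5\,F_N)$, which lets one rewrite the whole expression in terms of Fibonacci numbers alone.)

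The one genuinely delicate point is the bookkeeping forced by the lone ``defect'' vertex $0$: the four-term recurrence fails there outright, and the two equations flanking it are distorted by wrap-around through $0$, so the four-parameter ansatz is over-determined in a way that is not transparent, and one must confirm that the value of $c_2$ forced at $l=1$ is consistent with every other equation. I would handle this exactly as above --- using the reflection symmetry both to make the $l=N-1$ equation redundant and to collapse the $l=1$ equation to a single scalar equation in $c_2$ --- and I would cross-check the outcome on the smallest instance $C^2_5=K_5$, where the formula must return $h_5(0,l)=4$. As a remark, one can also arrive at the same destination through the effective resistance, via $h_N(0,l)=2N\cdot R_{C^2_N}(0,l)$ together with the spanning-tree count $\tau(C^2_N)=N F_N^{2}$ --- this makes the connection with the Kirchhoff index in the keyword list transparent --- but that route requires the commute-time identity as an extra ingredient.
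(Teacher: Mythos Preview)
Your argument is correct. The recurrence, its factorisation, the particular solution, the determination of $c_0,c_1,c_2,c_3$, and the final conversion to Chair's two displayed formulas via $\alpha=-\varphi^{-2}$ and $\bigl(\tfrac{3-\sqrt5}{2}\bigr)^N=(-1)^N\alpha^N$ all check out; in particular your compact expression is equivalent to the paper's single formula since $1-\alpha^{m}=\sqrt5\,F_m\,\varphi^{-m}$ gives $\dfrac{(1-\alpha^{l})(1-\alpha^{N-l})}{\sqrt5\,(1-\alpha^{N})}=\dfrac{F_lF_{N-l}}{F_N}$.

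However, this is not how the paper proceeds. The paper does not prove Chair's theorem at all; it is quoted as prior work. What the paper proves is the equivalent closed form $h_N(0,l)=\tfrac25\bigl(l(N-l)+2NF_lF_{N-l}/F_N\bigr)$, by a genuinely different, purely rational-arithmetic route: it folds the linear system by the reflection symmetry to a half-size system $H_N\vec x=4\vec 1$, exhibits an explicit $LDL^{T}$-type factorisation $H_N=U_N^{-1}W_ND_N\,{}^{t}W_N\,{}^{t}U_N^{-1}$ in which every entry is a ratio of odd-index Fibonacci numbers, and then inverts the system through two triangular changes of variable $\vec z\to\vec y\to\vec x$, invoking identities such as $F_{2n}-3F_{2n-2}+F_{2n-4}=0$ and $\sum_{k\le l}(-1)^{l-k}F_{2k-1}=F_l^{2}$ at each stage. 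No $\sqrt5$, no Binet formula, and no characteristic roots ever appear.

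Your method is shorter and more transparent for this particular graph: once the palindromic quartic is factored, four boundary conditions pin down four constants, and the merging of Chair's even/odd cases is explained in one line by the sign of $(-\alpha)^N$. What the paper's method buys is that it stays over $\QQ$ throughout, lands directly on the rational Fibonacci form that is the paper's headline, yields explicit entries of $H_N^{-1}$ and the Kirchhoff index as by-products, and (as the authors note) extends to $C_N^m$ for larger $m$, where the characteristic-root approach would require redoing the polynomial analysis from scratch.
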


In this paper, by using an elementary method different from 
Chair \cite{Chair}, we give a simpler single formula for HT's of simple random
walks on $C^2_N$, as follows: 
\begin{thm}\label{main}
The exact formula for the HT's of simple random walks on $C^2_N (N \geq 5)$ is, 
\[
h_N(0,l)=\frac{2}{5}\left(l\left(N-l\right)+2N\frac{F_{l}F_{N-l}}{F_{N}}\right).
\] 
\end{thm}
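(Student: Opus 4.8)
\emph{Overview of the plan.} The plan is to avoid electrical networks and spectral arguments, reducing everything to a single inhomogeneous linear recurrence whose unique solution is then identified with the claimed formula by elementary Fibonacci and Lucas identities.

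\emph{Setting up the recurrence.} Write $a_k:=h_N(k,0)$ for $k\in\mathbb{Z}_N$. For $N\ge 5$ the graph $C^2_N$ is simple and $4$-regular, the four neighbours of $k$ being the distinct vertices $k\pm1,k\pm2$; a one-step analysis of the walk therefore gives $a_0=0$ together with
\[
a_{k-2}+a_{k-1}-4a_k+a_{k+1}+a_{k+2}=-4\qquad(k\in\mathbb{Z}_N\setminus\{0\}),
\]
indices read modulo $N$. This linear system has a unique solution: in the homogeneous version, $(4I-A)a$ vanishes at every $k\neq0$, hence (the rows of $4I-A$ summing to zero) also at $0$, so $a$ is constant and $a_0=0$ forces $a=0$. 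By the relation $h_N(0,l)=h_N(l,0)$ recorded in the introduction it suffices to evaluate $a_l$.

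\emph{The candidate.} For all $k\in\mathbb{Z}$ put, with Fibonacci numbers extended by $F_{-n}=(-1)^{n+1}F_n$,
\[
b_k:=\tfrac{2}{5}\,k(N-k)+\tfrac{4N}{5F_N}\,F_kF_{N-k},
\]
which for $0\le k\le N-1$ is precisely $\tfrac{2}{5}\bigl(k(N-k)+2N\,\tfrac{F_kF_{N-k}}{F_N}\bigr)$. I would check two things. First, that $b$ satisfies the recurrence for \emph{all} integers $k$: by linearity one treats the two summands separately. The operator $E^{-2}+E^{-1}-4+E+E^{2}$ (with $(Ef)(k)=f(k+1)$) kills affine functions and sends $k^2\mapsto10$, hence sends $\tfrac25k(N-k)$ to $-4$; and it annihilates $k\mapsto F_kF_{N-k}$, which via the product-to-sum identity $5F_aF_b=L_{a+b}-(-1)^bL_{a-b}$ (with Lucas numbers $L_m$) reduces to $\bigl(L_{m+4}+L_{m-4}\bigr)-\bigl(L_{m+2}+L_{m-2}\bigr)-4L_m=7L_m-3L_m-4L_m=0$. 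Second, the boundary and wrap-around data: $F_0=0$ gives $b_0=b_N=0$; the invariance of $k(N-k)$ and of $F_kF_{N-k}$ under $k\mapsto N-k$ gives $b_k=b_{N-k}$; and $F_{N+1}-F_{N-1}=F_N$ is exactly what makes $b_{-1}=b_1$.

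\emph{Conclusion and the main difficulty.} Since the recurrence is literally local only for $2\le k\le N-2$, identifying $b$ with $a$ requires that the two ``end'' equations close up: the equation at $k=1$ calls for the index $-1\equiv N-1$, and the one at $k=N-1$ for $N+1\equiv1$; using $b_N=b_0=0$, the symmetry $b_k=b_{N-k}$, and $b_{-1}=b_1$, both reduce to instances of the recurrence that $b$ already satisfies on $\mathbb{Z}$. Hence the $N$-periodic extension of $(b_k)_{0\le k\le N-1}$ solves the full system, so $a_k=b_k$, proving Theorem~\ref{main}. I expect this wrap-around bookkeeping to be the only genuinely delicate point, and it is also where the denominator $F_N$ in the formula becomes transparent, because $F_{N+1}-F_{N-1}=F_N$ is the single identity that rescues the two end equations. (If one prefers to \emph{derive} rather than verify the formula, the same recurrence can be solved directly: its palindromic characteristic polynomial $x^4+x^3-4x^2+x+1$ has roots $1$, doubled, and $-\varphi^{\pm2}$ with $\varphi=\tfrac{1+\sqrt5}{2}$, and recognising $(-\varphi^{\pm2})^k$ as the ingredients of $(-1)^kF_{2k}$ and of $F_kF_{N-k}$ both produces the formula and explains its equivalence with Chair's two-case expression.)
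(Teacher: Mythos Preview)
Your argument is correct and complete: the uniqueness step (a one-line Laplacian kernel argument), the verification that the quadratic part produces the inhomogeneity $-4$, the Lucas-number computation $7L_m-3L_m-4L_m=0$ showing that $F_kF_{N-k}$ lies in the kernel of the difference operator, and the wrap-around checks $b_0=b_N=0$, $b_{-1}=b_1$, $b_{N+1}=b_{N-1}$ all go through as stated.

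This is a genuinely different route from the paper's. The paper \emph{solves} the system: it halves the variables by the reflection symmetry, then proves a structured factorisation $H_N=U_N^{-1}W_ND_N\,{}^tW_N\,{}^tU_N^{-1}$ (Theorem~\ref{decomposition}), and inverts this step by step through intermediate vectors $\vec z$ and $\vec y$ (Lemmas~\ref{2.2} and~\ref{2.3}) before summing to reach $h_N(0,l)$. Your approach instead \emph{verifies} the answer against the defining recurrence, exploiting that the characteristic polynomial $x^4+x^3-4x^2+x+1$ is palindromic and that the product-to-sum identity $5F_aF_b=L_{a+b}-(-1)^bL_{a-b}$ linearises the candidate. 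What you gain is brevity and transparency: the whole proof fits in a page, the role of the single identity $F_{N+1}-F_{N-1}=F_N$ in closing the boundary equations is made explicit, and the parenthetical remark on the roots $1,1,-\varphi^{\pm2}$ shows how the formula could have been discovered. What the paper's approach buys is that it is a derivation rather than a verification (no need to know the answer in advance), and its by-products have independent value: the factorisation of $H_N$ yields an explicit formula for $H_N^{-1}$, gives Proposition~\ref{lem3.1} as a corollary, and the intermediate Lemmas display further Fibonacci structure in the differences $y_l=x_l-x_{l-1}$. The two arguments are thus complementary: yours is the cleaner proof of the theorem as stated, while the paper's reveals more of the surrounding combinatorics.
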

Our proof is considerably short and fully combinatorial.
In particular, it has no-need of any spectral graph theoretical arguments.
Not only the formula itself but also intermediate results through the process of
our proof describe clear relations between the HT's of simple random walks on
$C^2_N$ and the Fibonacci numbers. 

\section{Preliminary}
In this section, we prepare some well-known formulas on Fibonacci numbers, 
which will be used throughout this paper. 
For proofs of these formulas, please refer to the appropriate
literature on Fibonacci numbers (e.g., \cite{Thomas,Koshy}).

\begin{align}
&F_{n+1}F_{n-1}-F_n^2=(-1)^n. \label{t1} \\
&F_{2n}-3F_{2(n-1)}+F_{2(n-2)}=0. \label{t13} \\
&F_{2n+1}=F_{n+1}^2+F_n^2. \label{t3} \\
&F_{m-n}=(-1)^{n+1}F_{m-1}F_n+(-1)^nF_mF_{n-1}. \label{t5} \\
&\frac{1}{F_{2n-1}F_{2n+1}}=\frac{F_{2n+2}}{F_{2n+1}}-\frac{F_{2n}}{F_{2n-1}}. \label{t7} \\
&\sum_{k=1}^l{(-1)^{l-k}F_{2k-1}}=F_l^{2}. \label{t10} \\
&F_{l}^{2}=\frac{1}{5}(F_{2l-1}+F_{2l+1})+\frac{2}{5}(-1)^{l-1}. \label{t11}
\end{align}

\section{Proof of Theorem  \ref{main}}
Let $L$ denote the Laplacian matrix of $C^2_N$ and 
let $L^{\prime}$ denote the matrix obtained from $L$ by deleting its first row and column. 
Let $\vec{h}$ be the column vector whose $i$-th entry is $h_N(0,i)$. 
Let $\vec{1}$ be a column vector of proper dimension whose entries are all $1$.
For the case of a random walk on $C^2_N$, a random walker moves, 
with the probability of $\frac{1}{4}$, to an arbitrary vertex adjacent
to the vertex where the walker is. Hence we have that $\forall l \in\mathbb{Z}_N$, 
$h_N(0,l)=\frac{1}{4}\left((1+h_N(-2,l))+(1+h_N(-1,l))+(1+h_N(1,l))+(1+h_N(2,l))\right)$,
and hence $\forall l \in\mathbb{Z}_N, -h_N(0,l-2)-h_N(0,l-1)+4h_N(0,l)-h_N(0,l+1)-h_N(0,l+2)=4$. 
Thus, we have $L' \vec{h}=4\vec{1}$. Combining this with the fact that
$h_N(0,l)=h_N(0,N-l)$ for all $l \in \mathbb{Z}_{N}$, we can halve the number of variables of
our problem. Let $L'(i, j)$ denote the $(i, j)$-th entry of $L'$. 
Let $H_N$ be the $\lfloor{N/2}\rfloor\times\lfloor{N/2}\rfloor$-matrix whose $(i,j)$-th entry is 
$L'(i, j)$ if $j=\frac{N}{2} (N=2n)$, and $L'(i, j)+L'(i, N-j)$ otherwise. 
Let $\vec{x}$ be an $\lfloor{N/2}\rfloor$-dimensional variable column vector whose $i$-th entry is
corresponding to $h_N(0,i)$. Then we have $H_N \vec{x}=$$4\vec{1}$.
For our problem, it suffices to solve this matrix equation. 
As a technical issue, in the case of $N=2n$, our matrix $H_{2n}$ is not symmetric.
However, if we multiply the last row of this matrix by $\frac{1}{2}$, then the resultant matrix
$H'_{2n}$ will be symmetric. Hence, only in the case of $N=2n$, we redefine
$H_{2n}:=H'_{2n}$ and change our matrix equation to $H_{2n} \vec{x}=$${}^{t}(4,\ldots,4,2)$.

Let $U_N$ be the upper triangular matrix whose $(i,j)$-th entry is $1$ if $j \geq i$.

Let $W_N$ be the following matrix:
\[
W_{N}:=
\begin{bmatrix}
1&0&0&\cdots&0&0\\
\frac{F_1}{F_3}&1&0&\cdots&0&0\\
0&\frac{F_3}{F_5}&1&\cdots&0&0\\
\vdots&\vdots&\vdots&\ddots&\vdots&\vdots \\
0&0&0&\cdots&1&0\\
0&0&0&\cdots&\frac{F_{2\left \lfloor \frac{N}{2} \right \rfloor-3}}{F_{2\left \lfloor \frac{N}{2} \right \rfloor-1}}&1\\
\end{bmatrix}. 
\]
Then its inverse $W_N^{-1}$ is as follows:
\[
W_N^{-1}=
\begin{bmatrix}
1&0&0&\cdots&0&0\\

-\frac{F_{1}}{F_{3}}&1&0&\cdots&0&0\\

\frac{F_{1}}{F_{5}}&-\frac{F_{3}}{F_{5}}&1&\cdots&0&0\\

\vdots&\vdots&\vdots&\ddots&\vdots&\vdots\\

(-1)^{\left \lfloor \frac{N}{2} \right \rfloor-2}\frac{F_{1}}{F_{2\left \lfloor \frac{N}{2} \right \rfloor-3}}&(-1)^{\left \lfloor \frac{N}{2} \right \rfloor-3}\frac{F_{3}}{F_{2\left \lfloor \frac{N}{2} \right \rfloor-3}}&(-1)^{\left \lfloor \frac{N}{2} \right \rfloor-4}\frac{F_{5}}{F_{2\left \lfloor \frac{N}{2} \right \rfloor-3}}&\cdots&1&0\\

(-1)^{\left \lfloor \frac{N}{2} \right \rfloor-1}\frac{F_{1}}{F_{2\left \lfloor \frac{N}{2} \right \rfloor-1}}&(-1)^{\left \lfloor \frac{N}{2} \right \rfloor-2}\frac{F_{3}}{F_{2\left \lfloor \frac{N}{2} \right \rfloor-1}}&(-1)^{\left \lfloor \frac{N}{2} \right \rfloor-3}\frac{F_{5}}{F_{2\left \lfloor \frac{N}{2} \right \rfloor-1}}&\cdots&-\frac{F_{2\left \lfloor \frac{N}{2} \right \rfloor-3}}{F_{2\left \lfloor \frac{N}{2} \right \rfloor-1}}&1\\
\end{bmatrix}.
\]

Let $D_N$ be the following matrix:
\[
D_{N}:=
\begin{bmatrix}
\frac{F_{3}}{F_{1}}&0&0&\cdots&0&0\\
0&\frac{F_{5}}{F_{3}}&0&\cdots&0&0\\
0&0&\frac{F_{7}}{F_{5}}&\cdots&0&0\\
\vdots&\vdots&\vdots&\ddots&\vdots&\vdots\\
0&0&0&\cdots&\frac{F_{2\left \lfloor \frac{N}{2} \right \rfloor-1}}{F_{2\left \lfloor \frac{N}{2} \right \rfloor-3}}&0\\
0&0&0&\cdots&0&\frac{F_{N}}{F_{2\left \lfloor \frac{N}{2} \right \rfloor-1}}\\
\end{bmatrix}. 
\]

Then we have the following:
\begin{thm}\label{decomposition}
$H_N=U^{-1}_N\,W_ND_N{}^tW_N\,{}^tU^{-1}_N$. 
\end{thm}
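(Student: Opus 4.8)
The plan is to collapse the four‑factor product on the right into a single tridiagonal matrix and then verify the resulting identity directly from the Fibonacci identities of Section~2; no spectral information enters at any point. First I would record that $U_N^{-1}$ is the bidiagonal matrix carrying $1$'s on the main diagonal, $-1$'s on the first superdiagonal and $0$'s elsewhere (immediate from $U_NU_N^{-1}=I$). Multiplying the claimed identity on the left by $U_N$ and on the right by ${}^tU_N$ and using $U_NU_N^{-1}=I$, we see that it is equivalent to
\[
U_N\,H_N\,{}^tU_N \;=\; W_N\,D_N\,{}^tW_N \;=:\; T_N ;
\]
and since $U_N^{-1}$ is the difference matrix just described, this is in turn equivalent to the statement that $H_N$ is the mixed second difference $H_N(i,j)=T_N(i,j)-T_N(i,j+1)-T_N(i+1,j)+T_N(i+1,j+1)$ of $T_N$, with the convention that any term whose row‑ or column‑index exceeds $m:=\lfloor N/2\rfloor$ is deleted. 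So it suffices to (a) evaluate $T_N=W_ND_N\,{}^tW_N$ in closed form, and (b) check its mixed second difference against the explicit folded reduced‑Laplacian matrix $H_N$ built at the beginning of this section.

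For (a): since $W_N$ is unit lower bidiagonal and $D_N$ is diagonal, $T_N$ is tridiagonal. Its $(i,i+1)$‑entry is $W_N(i,i)D_N(i,i)W_N(i+1,i)=\frac{F_{2i+1}}{F_{2i-1}}\cdot\frac{F_{2i-1}}{F_{2i+1}}=1$, so every off‑diagonal entry equals $1$; its interior diagonal entries are $D_N(i,i)+W_N(i,i-1)^2D_N(i-1,i-1)=\frac{F_{2i+1}+F_{2i-3}}{F_{2i-1}}=3$, using the three‑term recurrence $F_{m+2}+F_{m-2}=3F_m$ (this is \eqref{t13} for even $m$ and holds equally for odd $m$); its first diagonal entry is $D_N(1,1)=F_3/F_1=2$; and its last diagonal entry is $\frac{F_N+F_{2m-3}}{F_{2m-1}}$, which the Fibonacci recurrence collapses to $3$ when $N$ is odd and to $2$ when $N$ is even. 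Thus $T_N$ is tridiagonal with all off‑diagonal entries $1$ and main diagonal $(2,3,\dots,3,3)$ for odd $N$ and $(2,3,\dots,3,2)$ for even $N$. (Equivalently: $W_ND_N\,{}^tW_N$ is simply the $LDL^{t}$‑factorization of this tridiagonal matrix, whose pivot recurrence $p_1=2$, $p_i=d_i-1/p_{i-1}$ yields $p_i=F_{2i+1}/F_{2i-1}$ and $p_m=F_N/F_{2m-1}$ — essentially Cassini's identity \eqref{t1} — with multipliers $F_{2i-3}/F_{2i-1}$, which are exactly the diagonal of $D_N$ and the subdiagonal of $W_N$.)

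For (b): in the interior, the mixed second difference of $T_N$ returns the five‑point stencil $(-1,-1,4,-1,-1)$ — namely $H_N(i,i)=3-1-1+3=4$, $H_N(i,i\pm1)=-1$, $H_N(i,i\pm2)=-1$, and $0$ further out — which is precisely the row of $H_N$ coming from the walk recurrence $-h_N(0,l-2)-h_N(0,l-1)+4h_N(0,l)-h_N(0,l+1)-h_N(0,l+2)=4$. It remains to treat the finitely many boundary rows and columns (the smallest cases $N=5,6$, where the two ends overlap, being checked outright). At the low end, the identification $h_N(0,-1)=h_N(0,1)$ folds the first two rows of the reduced Laplacian into $H_N(1,1)=3$, $H_N(1,2)=H_N(1,3)=-1$, matching $T_N(1,1)-T_N(1,2)-T_N(2,1)+T_N(2,2)=2-1-1+3=3$ and its companions. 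At the high end one uses $h_N(0,n+1)=h_N(0,n-1)$, $h_N(0,n+2)=h_N(0,n-2)$ when $N=2n$ (keeping in mind the halving of the last row of $H_{2n}$ that resymmetrizes it) and $h_N(0,n+1)=h_N(0,n)$, $h_N(0,n+2)=h_N(0,n-1)$ when $N=2n+1$; in both cases a short computation against the last one or two entries of $T_N$ — for instance $T_N(m,m)=3$ and $T_N(m,m-1)-T_N(m,m)=-2$ for odd $N$, and $T_N(m,m)=2$ and $T_N(m-1,m-1)-T_N(m-1,m)-T_N(m,m-1)+T_N(m,m)=3$ for even $N$ — reproduces the folded Laplacian. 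Collecting the interior and boundary entries gives $H_N=U_N^{-1}T_N\,{}^tU_N^{-1}$, which is the assertion.

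I expect the only genuine obstacle to be this last bookkeeping step: one must keep straight the two distinct ``wrap'' behaviours of the fold $l\mapsto N-l$ — the fixed point $N/2$ in the even case versus the identified pair $\{n,n+1\}$ in the odd case — and remember the resymmetrization of $H_{2n}$, since precisely these features make the first and last diagonal entries of $T_N$ depart from the interior value $3$. Everything else is a one‑line appeal to the Fibonacci recurrence or to \eqref{t1}/\eqref{t13}, and in particular no eigenvalue information and no effective‑resistance formula is used.
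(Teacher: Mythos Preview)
Your argument is correct. Both you and the paper prove the identity by direct entrywise verification, but the two computations are organized differently enough to be worth a comment.

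The paper multiplies out the full five–factor product $S_N:=U_N^{-1}W_ND_N{}^tW_N{}^tU_N^{-1}$ in one shot, obtaining for each band $|i-j|\le 2$ a Fibonacci expression such as $S_N(i,i)=\frac{F_{2i-3}+F_{2i+1}}{F_{2i-1}}+\frac{F_{2i-1}+F_{2i+3}}{F_{2i+1}}-2$, and then collapses each of the eight cases to an integer using~\eqref{t13}. You instead peel off the outer factors first: you compute the inner product $T_N=W_ND_N{}^tW_N$, observe that it is the tridiagonal matrix with off–diagonal entries~$1$ and diagonal $(2,3,\dots,3,d_m)$ with $d_m\in\{2,3\}$ according to parity, and then recover $H_N$ as the mixed second difference of $T_N$ (since $U_N^{-1}$ is the forward difference). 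This buys two things. First, all Fibonacci content is isolated in the single pivot identity $\frac{F_{2i+1}+F_{2i-3}}{F_{2i-1}}=3$ (plus the boundary value $\frac{F_N+F_{2m-3}}{F_{2m-1}}$), which is exactly the $LDL^{t}$ pivot recursion for $T_N$; the remaining verification is pure integer arithmetic on a stencil. Second, it explains \emph{why} the particular $W_N$ and $D_N$ arise: they are simply the $LDL^{t}$ factors of the near–Toeplitz tridiagonal $T_N$, with Cassini's identity~\eqref{t1} driving the pivot sequence $p_i=F_{2i+1}/F_{2i-1}$. The paper's computation certifies the same facts but does not separate these two layers, so each entry carries its own Fibonacci bookkeeping. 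Your caveat about the boundary bookkeeping (the fixed point versus identified pair at the high end, and the resymmetrisation of $H_{2n}$) is exactly where the paper also splits into parity cases, and your spot checks at $(m,m)$, $(m-1,m)$, $(m-1,m-1)$ match the paper's values; the small cases $N=5,6$ are the only places where low and high boundaries overlap, and a direct check there is appropriate.
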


\noindent
{\it Proof of Theorem \ref{decomposition}.}

Let $S_N$ denote the matrix $U_N^{-1}W_ND_N{}^tW_N{}^tU_N^{-1}$.
By simple matrix-computation, the entries of $S_N$ can be calculated as follows. 
\[
S_N(i,j)=
\begin{cases}
0 &\text{($|i-j|\geq 3$)}, \\
-1 &\text{($|i-j|=2$)}, \\
2-\frac{F_{2i-3}+F_{2i+1}}{F_{2i-1}}  &\text{($i-j=1$)}, \\
2-\frac{F_{2j-3}+F_{2j+1}}{F_{2j-1}}  &\text{($j-i=1,j\neq\left \lfloor \frac{N}{2} \right \rfloor$)}, \\
1-\frac{F_{2\lfloor \frac{N}{2}\rfloor-3}+F_N}{F_{2\lfloor \frac{N}{2}\rfloor-1}} &\text{($(i,j)=(\left \lfloor \frac{N}{2} \right \rfloor-1,\left \lfloor \frac{N}{2} \right \rfloor)$)}, \\
\frac{F_{2i-3}+F_{2i+1}}{F_{2i-1}}+\frac{F_{2i-1}+F_{2i+3}}{F_{2i+1}}-2  &\text{($i=j\not\in \{ 1,\left \lfloor \frac{N}{2} \right \rfloor-1,\left \lfloor \frac{N}{2} \right \rfloor\}$)}, \\
\frac{F_3}{F_1}+\frac{F_1+F_5}{F_3}-2  &\text{($(i,j)=(1,1)$)}, \\
\frac{F_{2\lfloor \frac{N}{2}\rfloor-1}+F_{2\lfloor \frac{N}{2}\rfloor-5}}{F_{2\lfloor \frac{N}{2}\rfloor-3}}+\frac{F_{2\lfloor \frac{N}{2}\rfloor-3}+F_N}{F_{2\lfloor \frac{N}{2}\rfloor-1}}-2 &\text{($(i,j)=(\left \lfloor \frac{N}{2} \right \rfloor-1,\left \lfloor \frac{N}{2} \right \rfloor-1)$)}, \\
\frac{F_{2\lfloor \frac{N}{2}\rfloor-3}+F_N}{F_{2\lfloor \frac{N}{2}\rfloor-1}}  &\text{($(i,j)=(\left \lfloor \frac{N}{2} \right \rfloor,\left \lfloor \frac{N}{2} \right \rfloor)$)}. 
\end{cases}
\]

In the case that $|i-j|=1,j \neq \lfloor \frac{N}{2}\rfloor$, we have
\begin{align*}
  S_N(i,j)&=2-\frac{F_{2l-1}+F_{2l+3}}{F_{2l+1}}\\
          &=2-\frac{3F_{2l+1}}{F_{2l+1}} &\text{(by (\ref{t13}))}\\
&=-1. 
\end{align*}

In the case that $i=j\not\in \{1,\left \lfloor \frac{N}{2} \right \rfloor-1,\left \lfloor \frac{N}{2} \right \rfloor\}$, we have  
\begin{align*}
S_N(i,j)=\frac{F_{2l-1}+F_{2l+3}}{F_{2l+1}}+\frac{F_{2l+1}+F_{2l+5}}{F_{2l+3}}-2&=\frac{3F_{2l+1}}{F_{2l+1}}+\frac{3F_{2l+3}}{F_{2l+3}}-2 &&\text{(by (\ref{t13}))}\\
&=4 . 
\end{align*}

In the case of $S_N(1,1)$, we have 
\[
S_N(1,1)=\frac{F_3}{F_1}+\frac{F_1+F_5}{F_3}-2=2+\frac{1+5}{2}-2=3. 
\]

We will show $S_N(\left \lfloor \frac{N}{2} \right \rfloor-1,\left \lfloor \frac{N}{2} \right \rfloor-1)=H_N(\left \lfloor \frac{N}{2} \right \rfloor-1,\left \lfloor \frac{N}{2} \right \rfloor-1)$. 

In the case that $N$ is odd, 
\begin{align*}
S_N\left(\left \lfloor \frac{N}{2} \right \rfloor-1,\left \lfloor \frac{N}{2} \right \rfloor-1\right)
&=\frac{F_{2\lfloor \frac{N}{2}\rfloor-1}+F_{2\lfloor \frac{N}{2}\rfloor-5}}{F_{2\lfloor \frac{N}{2}\rfloor-3}}+\frac{F_{2\lfloor \frac{N}{2}\rfloor-3}+F_N}{F_{2\lfloor \frac{N}{2}\rfloor-1}}-2\\
&=\frac{F_{2\lfloor \frac{N}{2}\rfloor-1}+F_{2\lfloor \frac{N}{2}\rfloor-5}}{F_{2\lfloor \frac{N}{2}\rfloor-3}}+\frac{F_{2\lfloor \frac{N}{2}\rfloor-3}+F_{2\lfloor \frac{N}{2}\rfloor+1}}{F_{2\lfloor \frac{N}{2}\rfloor-1}}-2\\ 
&=\frac{3F_{2\lfloor \frac{N}{2}\rfloor-3}}{F_{2\lfloor \frac{N}{2}\rfloor-3}}+\frac{3F_{2\lfloor \frac{N}{2}\rfloor-1}}{F_{2\lfloor \frac{N}{2}\rfloor-1}}-2&&\text{(by (\ref{t13}))}\\
&=4. 
\end{align*}

In the case that $N$ is even, 
\begin{align*}
S_N\left(\left \lfloor \frac{N}{2} \right \rfloor-1,\left \lfloor \frac{N}{2} \right \rfloor-1\right)
&=\frac{F_{2\lfloor \frac{N}{2}\rfloor-1}+F_{2\lfloor \frac{N}{2}\rfloor-5}}{F_{2\lfloor \frac{N}{2}\rfloor-3}}+\frac{F_{2\lfloor \frac{N}{2}\rfloor-3}+F_N}{F_{2\lfloor \frac{N}{2}\rfloor-1}}-2\\
&=\frac{F_{2\lfloor \frac{N}{2}\rfloor-1}+F_{2\lfloor \frac{N}{2}\rfloor-5}}{F_{2\lfloor \frac{N}{2}\rfloor-3}}+\frac{F_{2\lfloor \frac{N}{2}\rfloor-3}+F_{2\lfloor \frac{N}{2}\rfloor}}{F_{2\lfloor \frac{N}{2}\rfloor-1}}-2\\
&=\frac{F_{2\lfloor \frac{N}{2}\rfloor-1}+F_{2\lfloor \frac{N}{2}\rfloor-5}}{F_{2\lfloor \frac{N}{2}\rfloor-3}}+\frac{F_{2\lfloor \frac{N}{2}\rfloor-3}+F_{2\lfloor \frac{N}{2}\rfloor+1}-F_{2\lfloor \frac{N}{2}\rfloor-1}}{F_{2\lfloor \frac{N}{2}\rfloor-1}}-2\\
&=\frac{3F_{2\lfloor \frac{N}{2}\rfloor-3}}{F_{2\lfloor \frac{N}{2}\rfloor-3}}+\frac{2F_{2\lfloor \frac{N}{2}\rfloor-1}}{F_{2\lfloor \frac{N}{2}\rfloor-1}}-2&&\text{(by (\ref{t13}))}\\
&=3. 
\end{align*}

We will show $S_N(\lfloor \frac{N}{2} \rfloor-1,\left \lfloor \frac{N}{2} \right \rfloor)=H_N(\left \lfloor \frac{N}{2} \right \rfloor-1,\left \lfloor \frac{N}{2} \right \rfloor)$.

In the case that $N$ is odd, 
\begin{align*}
S_N\left(\left \lfloor \frac{N}{2} \right \rfloor-1,\left \lfloor \frac{N}{2} \right \rfloor\right)
&=1-\frac{F_{2\lfloor \frac{N}{2}\rfloor-3}+F_N}{F_{2\lfloor \frac{N}{2}\rfloor-1}}\\
&=1-\frac{F_{2\lfloor \frac{N}{2}\rfloor-3}+F_{2\lfloor \frac{N}{2}\rfloor+1}}{F_{2\lfloor \frac{N}{2}\rfloor-1}}\\
&=1-\frac{3F_{2\lfloor \frac{N}{2}\rfloor-1}}{F_{2\lfloor \frac{N}{2}\rfloor-1}}&&\text{(by (\ref{t13}))}\\
&=-2.
\end{align*}

In the case that $N$ is even, 
\begin{align*}
S_N\left(\left \lfloor \frac{N}{2} \right \rfloor-1,\left \lfloor \frac{N}{2} \right \rfloor\right)
&=1-\frac{F_{2\lfloor \frac{N}{2}\rfloor-3}+F_N}{F_{2\lfloor \frac{N}{2}\rfloor-1}}\\
&=1-\frac{F_{2\lfloor \frac{N}{2}\rfloor-3}+F_{2\lfloor \frac{N}{2}\rfloor}}{F_{2\lfloor \frac{N}{2}\rfloor-1}}\\
&=1-\frac{F_{2\lfloor \frac{N}{2}\rfloor-3}+F_{2\lfloor \frac{N}{2}\rfloor+1}-F_{2\lfloor \frac{N}{2}\rfloor-1}}{F_{2\lfloor \frac{N}{2}\rfloor-1}}\\
&=1-\frac{2F_{2\lfloor \frac{N}{2}\rfloor-1}}{F_{2\lfloor \frac{N}{2}\rfloor-1}}&&\text{(by (\ref{t13}))}\\
&=-1. 
\end{align*}

Last, we will show $S_N(\left \lfloor \frac{N}{2} \right \rfloor,\left \lfloor \frac{N}{2} \right \rfloor)=H_N(\left \lfloor \frac{N}{2} \right \rfloor,\left \lfloor \frac{N}{2} \right \rfloor)$

In the case that $N$ is odd, 
\begin{align*}
S_N\left(\left \lfloor \frac{N}{2} \right \rfloor,\left \lfloor \frac{N}{2} \right \rfloor\right)
&=\frac{F_{2\left \lfloor \frac{N}{2} \right \rfloor-3}+F_{N}}{F_{2\left \lfloor \frac{N}{2} \right \rfloor-1}}\\
&=\frac{F_{2\left \lfloor \frac{N}{2} \right \rfloor-3}+F_{2\left \lfloor \frac{N}{2} \right \rfloor+1}}{F_{2\left \lfloor \frac{N}{2} \right \rfloor-1}}\\
&=\frac{3F_{2\left \lfloor \frac{N}{2} \right \rfloor-1}}{F_{2\left \lfloor \frac{N}{2} \right \rfloor-1}}&&\text{(by (\ref{t13}))}\\
&=3. 
\end{align*}

In the case that $N$ is even, 
\begin{align*}
S_N\left(\left \lfloor \frac{N}{2} \right \rfloor,\left \lfloor \frac{N}{2} \right \rfloor\right)
&=\frac{F_{2\left \lfloor \frac{N}{2} \right \rfloor-3}+F_{N}}{F_{2\left \lfloor \frac{N}{2} \right \rfloor-1}}\\
&=\frac{F_{2\left \lfloor \frac{N}{2} \right \rfloor-3}+F_{2\left \lfloor \frac{N}{2} \right \rfloor}}{F_{2\left \lfloor \frac{N}{2} \right \rfloor-1}}\\
&=\frac{F_{2\left \lfloor \frac{N}{2} \right \rfloor-3}+F_{2\left \lfloor \frac{N}{2} \right \rfloor+1}-F_{2\left \lfloor \frac{N}{2} \right \rfloor-1}}{F_{2\left \lfloor \frac{N}{2} \right \rfloor-1}}\\
&=\frac{2F_{2\left \lfloor \frac{N}{2} \right \rfloor-1}}{F_{2\left \lfloor \frac{N}{2} \right \rfloor-1}}&&\text{(by (\ref{t13}))}\\
&=2. 
\end{align*}

Therefore, we have $S_N=H_N$. \qed

Now let us define the new variable vector $\vec{y}={}^tU_N^{-1}\vec{x}$.
Then both the matrix equation $H_N \vec{x}=4\vec{1}$ in
the case of $N=2n+1$ and the matrix equation $H_N \vec{x}={}^{t}(4,4,\ldots,2)$
in the case of $N=2n$ can be expressed by the following single matrix equation. 
\begin{equation*}
\vec{y}=
\begin{bmatrix}
y_1\\
y_2\\
\vdots\\
y_l\\
\vdots\\
y_{\left \lfloor \frac{N}{2} \right \rfloor}\\
\end{bmatrix}
:=
\begin{bmatrix}
x_1\\
x_2-x_1\\
\vdots\\
x_l-x_{l-1}\\
\vdots\\
x_{\left \lfloor \frac{N}{2} \right \rfloor}-x_{\left \lfloor \frac{N}{2} \right \rfloor-1}\\
\end{bmatrix}
=2~{}^tW_N^{-1}D_N^{-1}W_N^{-1}
\begin{bmatrix}
N-1\\
N-3\\
\vdots\\
N-2l+1\\
\vdots\\
N-1-2 (\left \lfloor \frac{N}{2} \right \rfloor -1)\\
\end{bmatrix}.
\end{equation*}

Again, let us define the new variable vector $\vec{z}:={}^tW_N\vec{y}$.
Then we have
\[
\vec{z}=
\begin{bmatrix}
z_1\\
z_2\\
\vdots \\
z_{\left \lfloor \frac{N}{2} \right \rfloor-1} \\
z_{\left \lfloor \frac{N}{2} \right \rfloor}\\
\end{bmatrix}
=2D_N^{-1}W_N^{-1}
\begin{bmatrix}
N-1\\
N-3\\
\vdots\\
N-1-2(\left \lfloor \frac{N}{2} \right \rfloor -2)\\
N-1-2(\left \lfloor \frac{N}{2} \right \rfloor -1)\\
\end{bmatrix}. 
\]
By solving this matrix equation, we obtain the following. 
\begin{lem}\label{2.2}
$$1\leq \forall l \leq \left \lfloor \frac{N}{2} \right \rfloor-1,\quad
z_l=\frac{2}{5}\left(N-2l+1+\frac{(N-2l-1)F_{2l-1}+2N(-1)^{l-1}}{F_{2l+1}}\right).$$
$$z_{\left \lfloor \frac{N}{2} \right \rfloor}=\frac{2}{5F_N} \left( \left(N-2\left \lfloor \frac{N}{2} \right \rfloor \right) \left(F_{2\left \lfloor \frac{N}{2} \right \rfloor+1}+F_{2\left \lfloor \frac{N}{2} \right \rfloor-1}\right)+F_{2\left \lfloor \frac{N}{2} \right \rfloor}+2N(-1)^{\left \lfloor \frac{N}{2} \right \rfloor-1} \right).$$
\end{lem}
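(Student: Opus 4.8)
The plan is to obtain the vector $\vec z$ by directly expanding the matrix--vector product $\vec z = 2 D_N^{-1} W_N^{-1}\vec v$, where $\vec v$ is the column vector on the right-hand side of the displayed equation, whose $k$-th entry equals $N-2k+1$. Reading the explicit matrices off their definitions, the $(i,j)$-entry of $W_N^{-1}$ is $(-1)^{i-j}F_{2j-1}/F_{2i-1}$ when $j\le i$ and $0$ otherwise, while $D_N^{-1}$ is diagonal with $(l,l)$-entry $F_{2l-1}/F_{2l+1}$ for $1\le l\le\lfloor N/2\rfloor-1$ and with $(\lfloor N/2\rfloor,\lfloor N/2\rfloor)$-entry $F_{2\lfloor N/2\rfloor-1}/F_N$. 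Since $D_N^{-1}$ is diagonal, a one-line computation then gives
\[
z_l=\frac{2}{F_{2l+1}}\sum_{j=1}^{l}(-1)^{l-j}F_{2j-1}\,(N-2j+1)\qquad(1\le l\le \lfloor N/2\rfloor-1),
\]
and likewise $z_{\lfloor N/2\rfloor}=\frac{2}{F_N}\sum_{j=1}^{\lfloor N/2\rfloor}(-1)^{\lfloor N/2\rfloor-j}F_{2j-1}(N-2j+1)$. Everything therefore reduces to evaluating the alternating sum $\Sigma(l):=\sum_{j=1}^{l}(-1)^{l-j}F_{2j-1}(N-2j+1)$.

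The heart of the argument is the identity
\[
\Sigma(l)=\frac{1}{5}\bigl((N-2l+1)F_{2l+1}+(N-2l-1)F_{2l-1}+2N(-1)^{l-1}\bigr),\qquad l\ge 1,
\]
which I would prove by induction on $l$. The base case $l=1$ is a direct check. For the inductive step, splitting off the $j=l$ term gives $\Sigma(l)=(N-2l+1)F_{2l-1}-\Sigma(l-1)$; inserting the inductive hypothesis and simplifying, the asserted formula for $\Sigma(l)$ becomes equivalent to the arithmetic identity $3(N-2l+1)=2(2N-4l+1)-(N-2l-1)$ together with $F_{2l+1}+F_{2l-3}=3F_{2l-1}$, the latter being (\ref{t13}). (Alternatively one can split $\Sigma(l)=(N+1)\sum_{j}(-1)^{l-j}F_{2j-1}-2\sum_{j}(-1)^{l-j}jF_{2j-1}$, evaluate the first sum as $F_l^2$ by (\ref{t10}) and the second by Abel summation, but the induction is shorter and needs only (\ref{t13}).)

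Granting this identity, the first formula of Lemma~\ref{2.2} is immediate from $z_l=\frac{2}{F_{2l+1}}\Sigma(l)$ after pulling out the factor $\frac15$. For the last coordinate, set $m=\lfloor N/2\rfloor$, substitute the identity with $l=m$ into $z_m=\frac{2}{F_N}\Sigma(m)$, and rewrite
\[
(N-2m+1)F_{2m+1}+(N-2m-1)F_{2m-1}=(N-2m)(F_{2m+1}+F_{2m-1})+(F_{2m+1}-F_{2m-1}),
\]
using $F_{2m+1}-F_{2m-1}=F_{2m}$; this is exactly the stated expression for $z_{\lfloor N/2\rfloor}$.

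The only genuinely delicate part is the bookkeeping — keeping the signs and Fibonacci subscripts in $W_N^{-1}$ and $D_N^{-1}$ under control, and remembering that the bottom diagonal entry of $D_N^{-1}$ carries $F_N$ rather than $F_{2m+1}$, which is precisely why the formula for $z_{\lfloor N/2\rfloor}$ is packaged in a slightly different shape from that of $z_l$. The substantive mathematical input is the single Fibonacci identity for $\Sigma(l)$, and the induction makes it routine.
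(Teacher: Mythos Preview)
Your proof is correct. The matrix setup leading to $z_l=\frac{2}{F_{2l+1}}\Sigma(l)$ (and $z_{\lfloor N/2\rfloor}=\frac{2}{F_N}\Sigma(\lfloor N/2\rfloor)$) is identical to the paper's, and your final rewriting for the bottom coordinate matches the paper's computation line for line. The only divergence is in how the key sum $\Sigma(l)$ is evaluated: the paper does precisely what you sketch as the ``alternative'' --- it splits $\Sigma(l)=(N+1)\sum_{k}(-1)^{l-k}F_{2k-1}-2\sum_{k}(-1)^{l-k}kF_{2k-1}$, uses (\ref{t10}) to turn the first sum into $F_l^{2}$, evaluates the second sum to get a closed form in $F_l^{2}$ and $F_{2l-1}$, and then applies (\ref{t11}) to trade $5F_l^{2}$ for $F_{2l-1}+F_{2l+1}+2(-1)^{l-1}$. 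Your inductive argument reaches the same closed form while invoking only the three-term relation (\ref{t13}); it is shorter and avoids the auxiliary identity (\ref{t11}) and the somewhat implicit evaluation of the weighted alternating sum. The trade-off is that the paper's route makes the appearance of $F_l^{2}$ (and hence the link to (\ref{t10})) explicit, whereas your induction hides that structure but is cleaner as a verification.
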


\noindent
{\it Proof of Lemma \ref{2.2}.}

For the case that $1\leq \forall l \leq \left \lfloor \frac{N}{2} \right \rfloor-1$, we have
\begin{align*}
z_l
&=
2D_N^{-1}W_N^{-1}{}^t
\begin{bmatrix}
N-1, N-3, \cdots, N-1-2(\left \lfloor \frac{N}{2} \right \rfloor -1)
\end{bmatrix} \\
&=
\frac{2}{F_{2l+1}}
\sum_{k=1}^{l}(-1)^{l-k}(N-2k+1)F_{2k-1} \\
&=
\frac{2}{F_{2l+1}}
\left((N+1)\sum_{k=1}^{l}(-1)^{l-k}F_{2k-1}-2\sum_{k=1}^{l}kF_{2k-1} \right) \\
&=
\frac{2}{F_{2l+1}}
\left( (N+1)F_{l}^2-\frac{2}{5}F_{2l-1}+\frac{2}{5}(-1)^{l-1}(2l-1)-2lF_{l}^2 \right)&&\text{(by (\ref{t10}))}\\
&=
\frac{2}{5F_{2l+1}}
\left( 5(N-2l+1)F_{l}^2-2F_{2l-1}+2(-1)^{l-1}(2l-1)\right)\\
&=
\frac{2}{5F_{2l+1}}
\left( (N-2l+1)(F_{2l-1}+F_{2l+1}+2(-1)^{l-1})-2F_{2l-1}+2(-1)^{l-1}(2l-1) \right)&&\text{(by (\ref{t11}))}\\
&=
\frac{2}{5F_{2l+1}}
\left( (N-2l-1)F_{2l-1}+(N-2l+1)F_{2l+1}+2N(-1)^{l-1} \right) \\
&=
\frac{2}{5}
\left( (N-2l+1)+\frac{(N-2l-1)F_{2l-1}+2N(-1)^{l-1}}{F_{2l+1}} \right). 
\end{align*}


In the case of $z_{\lfloor \frac{N}{2} \rfloor}$, we have 
\begin{align*}
z_{\lfloor \frac{N}{2}\rfloor}
&=\frac{2}{F_N}\sum_{k=1}^{\lfloor \frac{N}{2}\rfloor}(-1)^{\lfloor \frac{N}{2}\rfloor-k}(N-2k+1)F_{2k-1}\\
&=\frac{2}{F_N}\left(\sum_{k=1}^{\lfloor \frac{N}{2}\rfloor}(-1)^{\lfloor \frac{N}{2}\rfloor-k}F_{2k-1}-2\sum_{k=1}^{\lfloor \frac{N}{2}\rfloor}kF_{2k-1}\right)\\
&=\frac{2}{F_N}\left((N+1)F_{\lfloor \frac{N}{2}\rfloor}^2-\frac{2}{5}F_{2\lfloor \frac{N}{2}\rfloor-1}+\frac{2}{5}(-1)^{\lfloor \frac{N}{2}\rfloor-1}\left(2\lfloor \frac{N}{2}\rfloor-1\right)-2\lfloor \frac{N}{2}\rfloor F_{\lfloor \frac{N}{2}\rfloor}^2\right) &&\text{(by (\ref{t10}))}\\
&=\frac{2}{5F_N}\left(5(N-2\lfloor \frac{N}{2}\rfloor +1)F_{\lfloor \frac{N}{2}\rfloor}^2-2F_{2\lfloor \frac{N}{2}\rfloor-1}+2(-1)^{\lfloor \frac{N}{2}\rfloor-1}\left(2\lfloor \frac{N}{2}\rfloor-1\right)\right)\\
&=\frac{2}{5F_N}\Biggl(\left(N-2\lfloor \frac{N}{2}\rfloor+1\right)\left(F_{2\lfloor \frac{N}{2}\rfloor-1}+F_{2\lfloor \frac{N}{2}\rfloor+1}+2(-1)^{\lfloor \frac{N}{2}\rfloor-1}\right) &&\text{(by (\ref{t11}))}\\
&-2F_{2\lfloor \frac{N}{2}\rfloor-1}+(-1)^{\lfloor \frac{N}{2}\rfloor-1}\left(4\lfloor \frac{N}{2}\rfloor-2\right)\Biggr) \\
&=\frac{2}{5F_N}\left(\left(N-2\lfloor \frac{N}{2}\rfloor\right)\left(F_{2\lfloor \frac{N}{2}\rfloor-1}+F_{2\lfloor \frac{N}{2}\rfloor+1}\right)+F_{2\lfloor \frac{N}{2}\rfloor}+(-1)^{\lfloor \frac{N}{2}\rfloor-1}2N\right).
\end{align*}
\qed

Combining the above lemma with the equation $\vec{y}={}^tW_N^{-1}\vec{z}$, we have the following. 

\begin{lem}\label{2.3}~\\
\hspace{100pt}$\displaystyle y_l=\frac{2}{5}\left((N-2l+1)+2N\frac{(-1)^{l-1}F_{N-2l+1}}{F_N}\right)$. 
\end{lem}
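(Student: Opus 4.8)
The plan is to use the relation $\vec z={}^tW_N\vec y$ set up just before the statement, rather than inverting it. Since $W_N$ is lower bidiagonal with $1$'s on the diagonal and $(i,i-1)$-entry $F_{2i-3}/F_{2i-1}$, its transpose ${}^tW_N$ is upper bidiagonal with $1$'s on the diagonal and $(j,j+1)$-entry $F_{2j-1}/F_{2j+1}$. Writing $M:=\lfloor N/2\rfloor$ for brevity, the matrix equation $\vec z={}^tW_N\vec y$ is thus the triangular system
\[
z_j=y_j+\frac{F_{2j-1}}{F_{2j+1}}\,y_{j+1}\quad(1\le j\le M-1),\qquad z_M=y_M,
\]
and since ${}^tW_N$ is unitriangular this determines $\vec y$ uniquely from $\vec z$. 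Hence it suffices to verify that the claimed values $\widetilde y_l:=\frac{2}{5}\bigl((N-2l+1)+2N\frac{(-1)^{l-1}F_{N-2l+1}}{F_N}\bigr)$ satisfy all of these equations, using the explicit formulas for $z_l$ $(1\le l\le M-1)$ and for $z_M$ provided by Lemma~\ref{2.2}.

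For the boundary equation $z_M=y_M$ I would split on the parity of $N$. If $N=2M$, then $N-2M=0$ and $F_{N-2M+1}=F_1=1$, and Lemma~\ref{2.2} reduces $z_M$ to $\frac{2}{5F_N}\bigl(F_N+2N(-1)^{M-1}\bigr)$; if $N=2M+1$, then $N-2M=1$, $F_{N-2M+1}=F_2=1$, and using $F_{2M-1}+F_{2M}=F_{2M+1}=F_N$ the formula for $z_M$ collapses to $\frac{2}{5F_N}\bigl(2F_N+2N(-1)^{M-1}\bigr)$. In both cases this equals $\widetilde y_M=\frac{2}{5}\bigl((N-2M+1)+\frac{2N(-1)^{M-1}}{F_N}\bigr)$. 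For the remaining equations $(j<M)$ I would substitute $\widetilde y_j$ and $\widetilde y_{j+1}$ into $y_j+\frac{F_{2j-1}}{F_{2j+1}}y_{j+1}$: the terms free of $(-1)^{j-1}$ agree with those of $z_j$ at once, and equating the coefficients of $(-1)^{j-1}$ and clearing the denominators $F_N$ and $F_{2j+1}$ leaves exactly the single Fibonacci identity
\[
F_{2j+1}F_{N-2j+1}-F_{2j-1}F_{N-2j-1}=F_N.
\]
This I would prove by rewriting the left-hand side, via the three-term recurrence, as $F_{N-2j+1}(F_{2j+1}-F_{2j-1})+F_{2j-1}(F_{N-2j+1}-F_{N-2j-1})=F_{2j}F_{N-2j+1}+F_{2j-1}F_{N-2j}$, and then applying the Fibonacci addition formula $F_{a+b}=F_{a+1}F_b+F_aF_{b-1}$ with $a=2j-1,\ b=N-2j+1$ (a standard identity, derivable from (\ref{t5})).

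The only genuinely non-routine steps are identifying the clean closed form $F_{2j+1}F_{N-2j+1}-F_{2j-1}F_{N-2j-1}=F_N$ of the residual identity and checking that the boundary equation $z_M=y_M$ holds \emph{uniformly} in the parity of $N$; everything else is mechanical, the main pitfalls being the index shift in the bidiagonal entries of $W_N$ and the sign $(-1)^{l-1}$ carried by $\widetilde y_l$.
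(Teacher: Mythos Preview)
Your proof is correct and takes a genuinely different route from the paper. The paper applies ${}^tW_N^{-1}$ explicitly to $\vec z$, obtaining
\[
y_l=F_{2l-1}\sum_{i=l}^{M-1}(-1)^{l+i}\frac{z_i}{F_{2i-1}}+(-1)^{l+M}\frac{F_{2l-1}}{F_{2M-1}}z_M,
\]
substitutes the formulas from Lemma~\ref{2.2}, and collapses the resulting sum by the telescoping identity (\ref{t7}) together with (\ref{t5}). You instead exploit the bidiagonal unitriangular shape of ${}^tW_N$ to reduce the claim to verifying the two-term recursion $z_j=\widetilde y_j+\frac{F_{2j-1}}{F_{2j+1}}\widetilde y_{j+1}$ plus a boundary check, and this boils everything down to the single clean identity $F_{2j+1}F_{N-2j+1}-F_{2j-1}F_{N-2j-1}=F_N$. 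Your approach avoids the telescoping machinery and is arguably more transparent; the paper's approach, on the other hand, is constructive in that it would discover the formula for $y_l$ rather than merely confirm it. Both rely on the same underlying addition formula for Fibonacci numbers, so neither is deeper than the other, but yours is shorter.
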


\noindent
{\it Proof of Lemma  \ref{2.3}.}

\begin{align*}
y_{l}
&=F_{2l-1}\sum_{i=l}^{\left \lfloor \frac{N}{2} \right \rfloor -1} (-1)^{l+i}\frac{z_i}{F_{2i-1}}
+(-1)^{l+\left \lfloor \frac{N}{2} \right \rfloor}\frac{F_{2l-1}}{F_{2\left \lfloor \frac{N}{2} \right \rfloor -1}} z_{\left \lfloor \frac{N}{2} \right \rfloor}\\
&=\frac{2}{5}F_{2l-1}\Biggl(\sum_{i=l}^{\left \lfloor \frac{N}{2} \right \rfloor-1}(-1)^{l+i}\frac{(N-2i+1)F_{2i+1}+(N-2i-1)F_{2i-1}+2N(-1)^{i-1}}{F_{2i-1}F_{2i+1}} \\
&+\frac{(-1)^{l+\left \lfloor \frac{N}{2} \right \rfloor}}{F_{N}F_{2\left \lfloor \frac{N}{2} \right \rfloor -1}}\left(\left(N-2\left \lfloor \frac{N}{2} \right \rfloor+1\right)F_{2\left \lfloor \frac{N}{2} \right \rfloor+1}+\left(N-2\left \lfloor \frac{N}{2} \right \rfloor-1\right)F_{2\left \lfloor \frac{N}{2} \right \rfloor-1}+2N (-1)^{\left \lfloor \frac{N}{2} \right \rfloor-1}\right)\Biggr) \\
&=\frac{2}{5}F_{2l-1}\Biggl(\sum_{i=l}^{\left \lfloor \frac{N}{2} \right \rfloor-1}(-1)^{l+i}\left(\frac{(N-2i-1)(F_{2i+1}+F_{2i-1})+2F_{2i+1}-2N(-1)^{i}}{F_{2i-1}F_{2i+1}}\right) \\
&+\frac{(-1)^{l+\left \lfloor \frac{N}{2} \right \rfloor}}{F_{N}F_{2\left \lfloor \frac{N}{2} \right \rfloor-1}}\left((N-2\left \lfloor \frac{N}{2} \right \rfloor+1)F_{2\left \lfloor \frac{N}{2} \right \rfloor+1}+(N-2\left \lfloor \frac{N}{2} \right \rfloor-1)F_{2\left \lfloor \frac{N}{2} \right \rfloor-1}+2N (-1)^{\left \lfloor \frac{N}{2} \right \rfloor-1})\right)\Biggr)\\
&=\frac{2}{5}F_{2l-1}\left(\frac{N-2l+1}{F_{2l-1}}+2N(-1)^{l-1}\sum_{i=l}^{\left \lfloor \frac{N}{2} \right \rfloor-1}\frac{1}{F_{2i-1}F_{2i+1}}+\frac{2N(-1)^{l-1}}{F_{N}F_{2\left \lfloor \frac{N}{2} \right \rfloor-1}}\right)\\
\end{align*}
\begin{align*}
&=\frac{2}{5}F_{2l-1}\left(\frac{N-2l+1}{F_{2l-1}}+2N(-1)^{l-1}\sum_{i=l}^{\left \lfloor \frac{N}{2} \right \rfloor-1}{\left(\frac{F_{2i+2}}{F_{2i+1}}-\frac{F_{2i}}{F_{2i-1}}\right)}+\frac{2N(-1)^{l-1}}{F_{N}F_{2\left \lfloor \frac{N}{2} \right \rfloor-1}}\right) &&\text{(by (\ref{t7}))}\\
&=\frac{2}{5}F_{2l-1}\left(\frac{N-2l+1}{F_{2l-1}}+2N(-1)^{l-1}\left(\frac{F_{2\left \lfloor \frac{N}{2} \right \rfloor}}{F_{2\left \lfloor \frac{N}{2} \right \rfloor-1}}-\frac{F_{2l}}{F_{2l-1}}\right)+2N(-1)^{l-1}\left(\frac{F_{N+1}}{F_{N}}-\frac{F_{2\left \lfloor \frac{N}{2} \right \rfloor}}{F_{2\left \lfloor \frac{N}{2} \right \rfloor-1}}\right)\right) \\
&=\frac{2}{5}\left(N-2l+1+2N(-1)^{l-1}\left(\frac{-F_{N}F_{2l}+F_{N+1}F_{2l-1}}{F_{N}}\right)\right) \\
&=\frac{2}{5}\left(N-2l+1+2N\frac{(-1)^{l-1}F_{N-2l+1}}{F_{N}}\right). &&\text{(by (\ref{t5}))}
\end{align*}\qed

Combining Lemmas  \ref{2.2} and  \ref{2.3} and the equation $x_l=\sum_{i=1}^ly_i$, we obtain the exact formula for the variables $x_l$ as follows:
\begin{align*}
h_N(0,l)=x_{l}
&=\sum_{i=1}^{l} \frac{2}{5}\left((N-2i+1)+2N\frac{(-1)^{i-1}F_{N-2i+1}}{F_{N}}\right) \\
&=\frac{2}{5}l(N+1)-\frac{4}{5}\sum_{i=1}^{l}i-\frac{4N}{5F_{N}}\sum_{i=1}^{l}(-1)^{i}(F_{N-1}F_{2i-1}-F_{N}F_{2i-2}) \\
&=\frac{2}{5}l(N-l)-\frac{4N}{5F_{N}}\left(F_{N-1}\sum_{i=1}^{l}(-1)^{i}F_{2i-1}-F_{N}\sum_{i=1}^{l}(-1)^{i}F_{2i-2}\right) \\
&=\frac{2}{5}l(N-l)+\frac{4N}{5F_{N}}\left((-1)^{l-1}F_{N-1}F_{l}^{2}+(-1)^{l}F_{N}F_{l-1}F_{l}\right) &&\text{(by (\ref{t10}))}\\
&=\frac{2}{5}l(N-l)+\frac{4N}{5F_{N}}F_{l}F_{N-l}=
\frac{2}{5}\left(l(N-l)+2N\frac{F_{l}F_{N-l}}{F_{N}}\right). \hspace{45pt} \Box
\end{align*}

From Theorem \ref{decomposition} we have $H_N^{-1}={}^tU_N\,{}^tW^{-1}_ND^{-1}_NW^{-1}_N\,U_N$. Hence
the entries of $H^{-1}_N$ also can be calculated as follows:

\noindent
For $1\leq i,j\leq \left \lfloor \frac{N}{2} \right \rfloor$,
\begin{align*}
H_N^{-1}(i,j)=
\begin{cases}
\displaystyle\frac{F_iF_jF_{N-i}F_{N-j}}{F_NF_{N-1}}+\displaystyle\sum_{k=1}^{i-1}\frac{F_{i-k}F_{j-k}F_{N-i-k}F_{N-j-k}}{F_{N-2k+1}F_{N-2k-1}} &\text{if $i\leq j$,}\\
\displaystyle\frac{F_iF_jF_{N-i}F_{N-j}}{F_NF_{N-1}}+\displaystyle\sum_{k=1}^{j-1}\frac{F_{i-k}F_{j-k}F_{N-i-k}F_{N-j-k}}{F_{N-2k+1}F_{N-2k-1}} &\text{otherwise.}
\end{cases}
\end{align*}

Combining the above with the equation $H_N \vec{x}=$$4\vec{1}$, we have the following.

\begin{prop} \label{lem3.1}
\begin{align*}
h_N(0,1)=\frac{2}{F_N}\sum_{i=0}^NF_iF_{N-i}. 
\end{align*}
\end{prop}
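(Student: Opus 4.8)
The plan is to read off $h_N(0,1)=x_1$ directly from the first row of $H_N^{-1}$ together with the right-hand side of the matrix equation $H_N\vec x=\vec b$, and then to collapse the resulting partial sum into the full Fibonacci convolution $\sum_{i=0}^{N}F_iF_{N-i}$ by a reflection symmetry.

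First I would specialize the displayed formula for $H_N^{-1}(i,j)$ to $i=1$. Since the correction sum then runs from $k=1$ to $i-1=0$, it is empty, and using $F_1=1$ one obtains the clean identity $H_N^{-1}(1,j)=\dfrac{F_jF_{N-j}}{F_N}$ for every $1\le j\le\lfloor N/2\rfloor$. Because $\vec x=H_N^{-1}\vec b$, its first coordinate is $x_1=\sum_{j=1}^{\lfloor N/2\rfloor}H_N^{-1}(1,j)\,b_j$, where $\vec b=4\vec 1$ when $N$ is odd and $\vec b={}^{t}(4,\dots,4,2)$ when $N$ is even.

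Next I would split into the two parities and use the reflection $j\mapsto N-j$, under which $F_jF_{N-j}$ is invariant. For $N=2n+1$ this gives $x_1=\dfrac{4}{F_N}\sum_{j=1}^{n}F_jF_{N-j}=\dfrac{2}{F_N}\sum_{j=1}^{N-1}F_jF_{N-j}$, since the $n$ terms $j=1,\dots,n$ are exactly half of the symmetric sum $\sum_{j=1}^{N-1}F_jF_{N-j}$. For $N=2n$ the last entry of $\vec b$ is $2$ rather than $4$, so $x_1=\dfrac{4}{F_N}\sum_{j=1}^{n-1}F_jF_{N-j}+\dfrac{2}{F_N}F_n^{2}$; and since $\sum_{j=1}^{N-1}F_jF_{N-j}=2\sum_{j=1}^{n-1}F_jF_{N-j}+F_n^{2}$ (the central term $j=n$ contributing $F_n^{2}$ once), this again equals $\dfrac{2}{F_N}\sum_{j=1}^{N-1}F_jF_{N-j}$. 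In both cases, appending the vanishing end terms $F_0F_N$ and $F_NF_0$ (recall $F_0=0$) extends the range to give $x_1=\dfrac{2}{F_N}\sum_{i=0}^{N}F_iF_{N-i}$, as claimed.

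The computation is essentially immediate once the first row of $H_N^{-1}$ is in hand; the one point that needs care is the even case, where one must recall that $H_{2n}$ was rescaled so that its equation reads $H_{2n}\vec x={}^{t}(4,\dots,4,2)$, and check that the factor $2$ in the last coordinate is exactly what matches the once-counted central term $F_n^{2}$ of the full convolution, so that the two parities fuse into a single formula. As an alternative one could substitute $l=1$ into Theorem~\ref{main}, obtaining $h_N(0,1)=\dfrac{2}{5}\bigl(N-1+2N\tfrac{F_{N-1}}{F_N}\bigr)$, and then invoke the convolution identity $5\sum_{i=0}^{N}F_iF_{N-i}=(N-1)F_N+2NF_{N-1}$; but going through $H_N^{-1}$ keeps the argument within the tools just developed.
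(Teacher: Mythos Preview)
Your proof is correct and follows essentially the same approach as the paper: specialize the formula for $H_N^{-1}(1,j)$, multiply by the right-hand side, and fold the half-sum into the full convolution via $j\mapsto N-j$. In fact you are more careful than the paper's one-line argument, which writes $h_N(0,1)=4\sum_{j=1}^{\lfloor N/2\rfloor}H_N^{-1}(1,j)$ and then $=\tfrac{2}{F_N}\sum_{j=0}^{N}F_jF_{N-j}$ without explicitly treating the even case; your parity split makes precise exactly the point you flag, namely that the coefficient $2$ on the last entry of $\vec b$ matches the singly-counted middle term $F_n^2$.
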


\begin{proof}
\begin{align*}
h_N(0,1)&=4\sum_{j=1}^{\left \lfloor \frac{N}{2} \right \rfloor}H_N^{-1}(1,j)
=4\sum_{j=1}^{\left \lfloor \frac{N}{2} \right \rfloor}\frac{F_1F_jF_{N-1}F_{N-j}}{F_NF_{N-1}}
=4\sum_{j=1}^{\left \lfloor \frac{N}{2} \right \rfloor}\frac{F_jF_{N-j}}{F_N}
=\frac{2}{F_N}\sum_{j=0}^NF_jF_{N-j}. 
\end{align*}
\end{proof}

\begin{rem}
We also obtain the counterpart of Theorem \ref{decomposition} for the case of $C^3_N$ \cite{C3N},
which convinced us that our method is always efficient for the case of general 
$C^m_N$, although their calculations get much complicated as the parameter $m$ get larger. 
\end{rem}

\section{Effective Resistance, Kirchhoff Index, and Graph Complexity}

The {\it effective resistance} between a pair of vertices $\{p,q\}$ of
a connected graph $G$, denoted by $r(G;p,q)$, is the electrical resistance
measured across $p$ and $q$ when the graph $G$ represents an electrical
circuit such that each edge of $G$ is a unit resistor.

In 1959, Nash-Williams \cite{Nash} proved the following surprising formula
between the effective resistances and the average hitting times. 
\begin{thm}[Nash-Williams, 1959 \cite{Nash}] \label{Nash}
\begin{align*}
r(G;x,y)=\frac{h(G;x,y)+h(G;y,x)}{2|E(G)|}.
\end{align*}
\end{thm}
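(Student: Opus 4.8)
The plan is to reduce the identity to a single statement about electrical potentials on $G$. I will exhibit the vector of average hitting times to a fixed endpoint as an explicit solution of a Laplace-type equation, and then read off $h(G;x,y)+h(G;y,x)$ as the potential difference produced by a unit current flowing from $x$ to $y$; the factor $2|E(G)|$ will emerge from the total degree of $G$.

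First I would fix the endpoint $y$ and assemble the hitting times into a vector $\mathbf h^{(y)}$ on $V(G)$ with $(\mathbf h^{(y)})_v=h(G;v,y)$; in particular $(\mathbf h^{(y)})_y=0$. The same one-step conditioning used earlier to derive $L'\vec h=4\vec 1$ for $C^2_N$ gives, for every $v\neq y$, $h(G;v,y)=1+\tfrac1{\deg_G(v)}\sum_{u\sim v}h(G;u,y)$, i.e. $(L\mathbf h^{(y)})_v=\deg_G(v)$, where $L$ is the Laplacian matrix of $G$. Because every row of $L$ sums to zero, the entries of $L\mathbf h^{(y)}$ sum to zero, and this pins down the one remaining coordinate: $(L\mathbf h^{(y)})_y=-\sum_{v\neq y}\deg_G(v)=-\bigl(2|E(G)|-\deg_G(y)\bigr)$. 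Writing $\mathbf d$ for the degree vector and $\mathbf e_v$ for the $v$-th standard basis vector, these combine into
\[
L\mathbf h^{(y)}=\mathbf d-2|E(G)|\,\mathbf e_y .
\]

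Next I would use the electrical-network dictionary: when a current vector $I$ with zero coordinate sum is imposed on $G$ (each edge a unit resistor), the node potentials $\phi$ solve $L\phi=I$ and, since $G$ is connected so that $\ker L$ is the span of the all-ones vector, are unique up to an additive constant; for the unit source-sink pair $I=\mathbf e_x-\mathbf e_y$, Ohm's law gives $r(G;x,y)=\phi_x-\phi_y$. Put $\chi:=\mathbf h^{(y)}-\mathbf h^{(x)}$; subtracting the two instances of the displayed identity yields $L\chi=2|E(G)|\,(\mathbf e_x-\mathbf e_y)$, so $\chi/(2|E(G)|)$ is a legitimate potential for unit current from $x$ to $y$, whence $r(G;x,y)=(\chi_x-\chi_y)/(2|E(G)|)$. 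Finally, using $(\mathbf h^{(y)})_y=0=(\mathbf h^{(x)})_x$,
\[
\chi_x-\chi_y=\bigl((\mathbf h^{(y)})_x-(\mathbf h^{(x)})_x\bigr)-\bigl((\mathbf h^{(y)})_y-(\mathbf h^{(x)})_y\bigr)=h(G;x,y)+h(G;y,x),
\]
and the asserted formula drops out.

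The only delicate point is the bookkeeping in the first step — correctly reading off $(L\mathbf h^{(y)})_y$ from the zero-row-sum relation, and invoking connectedness of $G$ so that ``potential up to an additive constant'' is well posed; everything afterward is linear superposition together with two lines of arithmetic. (A purely combinatorial variant is also available, expanding $r(G;x,y)$ and each hitting time via the matrix-tree theorem into ratios of spanning-tree and spanning-forest counts, but the electrical-network route above is the shortest and is the classical proof.)
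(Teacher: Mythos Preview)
Your argument is correct and is essentially the standard modern proof of the commute-time identity: derive $L\mathbf h^{(y)}=\mathbf d-2|E(G)|\mathbf e_y$ from one-step conditioning, subtract the analogous equation for $x$, and read off the potential difference. The bookkeeping at the $y$-coordinate and the appeal to connectedness for uniqueness of potentials up to a constant are handled properly.

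As for comparison: the paper does not actually prove this theorem. It is quoted as a classical result of Nash-Williams and used as a black box in Section~4 (to pass from hitting times to effective resistances en route to the Kirchhoff index and the spanning-tree count). So there is no ``paper's own proof'' to compare against; your write-up supplies a self-contained justification where the paper simply cites the literature. If anything, one could remark that the precise statement $r(G;x,y)=\bigl(h(G;x,y)+h(G;y,x)\bigr)/(2|E(G)|)$ is often attributed in the random-walk literature to Chandra, Raghavan, Ruzzo, Smolensky, and Tiwari, with Nash-Williams' 1959 paper establishing the underlying random-walk/electrical-network correspondence; but this is a historical footnote and does not affect the mathematics of your proof.
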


The Kirchhoff index $Kf(G)$ of the graph $G$ is defined as 
$Kf(G):=\sum_{i<j}r(G;v_i,v_j)$.

\begin{thm}\label{ki}
\begin{align*}
\mathrm{Kf}(C_N^2)=\frac{N(N-1)(5N+17)}{300}+\frac{2N^2}{25}\frac{F_{N-1}}{F_N}. 
\end{align*}
\end{thm}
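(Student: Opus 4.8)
The plan is to compute $\mathrm{Kf}(C_N^2)$ directly from its definition by summing effective resistances, which we convert to average hitting times via the Nash--Williams formula (Theorem \ref{Nash}). Since $C_N^2$ is a $4$-regular graph on $N$ vertices, $|E(C_N^2)| = 2N$, and because the dihedral group $D_N$ acts vertex-transitively with $r(C_N^2; p, q)$ depending only on the "distance" $l = q - p$ along the cycle, we have
\[
\mathrm{Kf}(C_N^2) = \frac{N}{2}\sum_{l=1}^{N-1} r(C_N^2; 0, l) = \frac{N}{2}\sum_{l=1}^{N-1}\frac{2\,h_N(0,l)}{2|E(C_N^2)|} = \frac{1}{4}\sum_{l=1}^{N-1}h_N(0,l),
\]
using $h_N(0,l) = h_N(0,N-l)$ so that $h(G;0,l)+h(G;l,0) = 2h_N(0,l)$. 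Thus the entire task reduces to evaluating $\sum_{l=1}^{N-1} h_N(0,l)$ with $h_N(0,l)$ given by Theorem \ref{main}.

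Next I would split the sum according to the two terms of the formula in Theorem \ref{main}:
\[
\sum_{l=1}^{N-1} h_N(0,l) = \frac{2}{5}\sum_{l=1}^{N-1} l(N-l) + \frac{4N}{5 F_N}\sum_{l=1}^{N-1} F_l F_{N-l}.
\]
The first sum is elementary: $\sum_{l=1}^{N-1} l(N-l) = N\binom{N}{2} - \sum_{l=1}^{N-1}l^2 = \frac{N^2(N-1)}{2} - \frac{(N-1)N(2N-1)}{6} = \frac{N(N-1)(N+1)}{6}$. The second sum requires a closed form for the Fibonacci convolution $\sum_{l=0}^{N} F_l F_{N-l}$ (equivalently $\sum_{l=1}^{N-1}F_l F_{N-l}$ since the $l=0$ and $l=N$ terms vanish). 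This convolution is a classical identity; one standard form is $\sum_{l=0}^{N} F_l F_{N-l} = \frac{N L_N - F_N}{5}$ where $L_N$ is the $N$-th Lucas number, or equivalently, using $L_N = F_{N-1} + F_{N+1} = 2F_{N+1} - F_N$, one can rewrite it purely in terms of $F_N$ and $F_{N-1}$. I would either cite this from the Fibonacci literature already referenced (\cite{Thomas,Koshy}) or derive it quickly from the generating function $\big(\sum F_n x^n\big)^2 = \frac{x^2}{(1-x-x^2)^2}$, or inductively using the recurrences \eqref{t13} and standard identities; a convenient target is
\[
\sum_{l=1}^{N-1} F_l F_{N-l} = \frac{(N-1)L_N - F_{N-1} + F_N}{5} \quad\text{(to be pinned down precisely)}.
\]

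Finally I would substitute both evaluated sums back, obtaining
\[
\mathrm{Kf}(C_N^2) = \frac{1}{4}\left(\frac{2}{5}\cdot\frac{N(N-1)(N+1)}{6} + \frac{4N}{5F_N}\sum_{l=1}^{N-1}F_l F_{N-l}\right) = \frac{N(N-1)(N+1)}{60} + \frac{N}{5F_N}\sum_{l=1}^{N-1}F_l F_{N-l},
\]
and then expand the Fibonacci convolution so that its $N\,L_N/5$-type leading piece combines with $\frac{N(N-1)(N+1)}{60}$ to produce the polynomial part $\frac{N(N-1)(5N+17)}{300}$, while the residual piece contributes exactly $\frac{2N^2}{25}\frac{F_{N-1}}{F_N}$. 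Matching the two sides will require care: the convolution identity must be normalized so that, after dividing by $F_N$, the ``arithmetic'' (non-$F_{N-1}/F_N$) part is a genuine polynomial in $N$ — this is automatic if the closed form has the shape $(\text{polynomial})\cdot F_N + (\text{polynomial})\cdot F_{N-1}$, which it does. The main obstacle is getting the Fibonacci convolution identity in exactly the right form and then doing the bookkeeping so that the polynomial terms collapse cleanly to $\frac{N(N-1)(5N+17)}{300}$; everything else is routine. A sanity check against a small case such as $N = 5$ or $N = 6$ (computing $\mathrm{Kf}(C_N^2)$ from the explicit hitting times) would confirm the normalization before writing up the final algebra. \qed
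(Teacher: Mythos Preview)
Your proposal is correct and follows the same route as the paper: convert $\mathrm{Kf}(C_N^2)$ to a sum of hitting times via Nash--Williams, substitute Theorem~\ref{main}, and close out with the Fibonacci convolution $\sum_{l=0}^N F_lF_{N-l}=\tfrac{1}{5}\bigl((N-1)F_N+2NF_{N-1}\bigr)$ (your form $\tfrac{NL_N-F_N}{5}$ is equivalent and correct; discard the tentative alternative you flagged as ``to be pinned down''). Two small differences in execution: you invoke vertex-transitivity up front to reach $\tfrac{1}{4}\sum_{l=1}^{N-1}h_N(0,l)$, whereas the paper keeps the weight $(N-i)$ from $\sum_{x<y}r(x,y)=\sum_i (N-i)r(0,i)$ and only later collapses $\sum_i iF_iF_{N-i}$ by symmetry (so your version avoids the $\sum i^3$ and $\sum iF_iF_{N-i}$ bookkeeping); and the paper obtains the convolution identity internally, by equating the $l=1$ case of Theorem~\ref{main} with Proposition~\ref{lem3.1}, rather than citing it from the Fibonacci literature.
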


\begin{proof}
The following equation is derived immediately from Theorem \ref{main} and Proposition \ref{lem3.1}.  
\begin{align} \label{fib3.2}
\sum_{i=0}^NF_iF_{N-i}=\frac{1}{5}((N-1)F_N+2NF_{N-1}).
\end{align}
Then we have:
\begin{align*}
\mathrm{Kf}(C_N^2)
&=\sum_{x<y}r(C_N^2;x,y)
=\sum_{i=1}^{N-1}(N-i)r(C_N^2;0,i)
=\sum_{i=1}^{N-1}(N-i)\frac{h(C_N^2;0,i)+h(C_N^2;i,0)}{2|E(C_N^2)|}\\
&=\sum_{i=1}^{N-1}(N-i)\frac{h(C_N^2;0,i)}{2N}
=\sum_{i=1}^{N-1}\frac{(N-i)}{5N}\left(i(N-i)+2N\frac{F_iF_{N-i}}{F_N}\right)\\
&=\frac{1}{5}\left(N\sum_{i=1}^{N-1}i-\sum_{i=1}^{N-1}i^2+\frac{2N}{F_N}\sum_{i=1}^{N-1}F_iF_{N-i}\right)-\frac{1}{5N}\left(N\sum_{i=1}^{N-1}i^2-\sum_{i=1}^{N-1}i^3+\frac{2N}{F_N}\sum_{i=1}^{N-1}iF_iF_{N-i}\right)\\
&=\frac{1}{5}\left(N\sum_{i=1}^{N-1}i-2\sum_{i=1}^{N-1}i^2+\frac{2N}{F_N}\sum_{i=1}^{N-1}F_iF_{N-i}\right)-\frac{1}{5N}\left(-\sum_{i=1}^{N-1}i^3+\frac{N^2}{F_N}\sum_{i=1}^{N-1}F_iF_{N-i}\right)\\
&=\frac{1}{5}\left(N\sum_{i=1}^{N-1}i-2\sum_{i=1}^{N-1}i^2+\frac{1}{N}\sum_{i=1}^{N-1}i^3+\frac{N}{F_N}\sum_{i=1}^{N-1}F_iF_{N-i}\right)\\
&=\frac{1}{5}\left(\frac{N^2(N-1)}{2}-\frac{N(N-1)(2N-1)}{3}+\frac{N(N-1)^2}{4}+\frac{N}{5F_N}((N-1)F_N+2NF_{N-1})\right) &&\text{(by (\ref{fib3.2}))}\\
\end{align*}
\begin{align*}
&=\frac{1}{5}\left(\frac{N(N-1)(N+1)}{12}+\frac{N}{5F_N}(2NF_{N-1}+(N-1)F_N)\right)\\
&=\frac{1}{5}\left(\frac{N(N-1)(N+1)}{12}+\frac{2N^2F_{N-1}}{5F_N}+\frac{N(N-1)}{5}\right)\\
&=\frac{1}{5}\left(\frac{N(N-1)(5N+17)}{60}+\frac{2N^2F_{N-1}}{5F_N}\right)\\
&=\frac{N(N-1)(5N+17)}{300}+\frac{2N^2}{25}\frac{F_{N-1}}{F_N}. 
\end{align*}
\end{proof}

The {\textit{graph complexity}} of a graph $G$, denoted by $t(G)$,
means the number of spanning trees in $G$. 
Then let $t(G;x,y)$ denote the graph complexity of the graph
obtained by identifying two vertices $x$ and $y$ of $G$. 
The following famous formula between the effective resistances and
the graph complexities was proven in Kirchhoff \cite{Kirchhoff}.   
\begin{thm}[G. Kirchhoff, 1847 \cite{Kirchhoff}]\label{Kirchhoff}
\begin{align*}
r(G;x,y)=\frac{t(G;x,y)}{t(G)}.
\end{align*}
\end{thm}

Furthermore, D. J. Kleitman and B. Golden \cite{Kleitman} proved
the following beautiful formula for the graph complexity of $C_{N}^{2}$.  
\begin{thm}[D. J. Kleitman and B. Golden, 1975 \cite{Kleitman}]\label{Kleitman}
\begin{align*}
t(C_{N}^{2})=NF_N^2.
\end{align*}
\end{thm}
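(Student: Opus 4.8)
The plan is to prove Theorem~\ref{Kleitman} via the Matrix--Tree theorem, exploiting that $C_N^2$ is the circulant graph on $\ZZ_N$ with connection set $\{\pm1,\pm2\}$, so that all of its Laplacian eigenvalues are explicit. Writing $\omega:=e^{2\pi i/N}$, the Laplacian eigenvalues of $C_N^2$ are $\mu_j=4-\omega^{j}-\omega^{-j}-\omega^{2j}-\omega^{-2j}$ for $j=0,1,\dots,N-1$, with $\mu_0=0$. Setting $y_j:=\omega^{j}+\omega^{-j}=2\cos(2\pi j/N)$ and using $\omega^{2j}+\omega^{-2j}=y_j^{2}-2$, one gets the clean factorization $\mu_j=6-y_j-y_j^{2}=(2-y_j)(3+y_j)$. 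The Matrix--Tree theorem then gives
\[
t(C_N^2)=\frac1N\prod_{j=1}^{N-1}\mu_j=\frac1N\left(\prod_{j=1}^{N-1}(2-y_j)\right)\left(\prod_{j=1}^{N-1}(3+y_j)\right),
\]
so everything reduces to evaluating the two products.

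The first product is classical: $2-y_j=|1-\omega^{j}|^{2}$, so $\prod_{j=1}^{N-1}(2-y_j)=\bigl|\prod_{j=1}^{N-1}(1-\omega^{j})\bigr|^{2}=N^{2}$, since $\prod_{j=1}^{N-1}(z-\omega^{j})=1+z+\cdots+z^{N-1}$ takes the value $N$ at $z=1$.

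For the second product, note $3+\omega^{j}+\omega^{-j}=\omega^{-j}\bigl(\omega^{2j}+3\omega^{j}+1\bigr)=\omega^{-j}(\omega^{j}+\phi^{2})(\omega^{j}+\phi^{-2})$, where $\phi=\tfrac{1+\sqrt5}{2}$, because the roots of $t^{2}+3t+1$ are $-\phi^{2}=\tfrac{-3-\sqrt5}{2}$ and $-\phi^{-2}=\tfrac{-3+\sqrt5}{2}$. Evaluating $\prod_{j=1}^{N-1}(\omega^{j}+\phi^{\pm2})$ with the help of $\prod_{j=1}^{N-1}(z-\omega^{j})=\frac{z^{N}-1}{z-1}$ and simplifying via $\phi^{2}+1=\sqrt5\,\phi$ and $\phi^{-2}+1=\sqrt5\,\phi^{-1}$, the product collapses to the scalar $\prod_{j=1}^{N-1}\omega^{-j}=(-1)^{N-1}$ times $\tfrac15\bigl(2-(-1)^{N}(\phi^{2N}+\phi^{-2N})\bigr)$. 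Recognizing $\phi^{2N}+\phi^{-2N}=L_{2N}$ (the $2N$-th Lucas number) and invoking the identity $L_{2N}=5F_N^{2}+2(-1)^{N}$ turns this into $(-1)^{N-1}F_N^{2}$; the two parity-dependent signs multiply to $1$, so $\prod_{j=1}^{N-1}(3+y_j)=F_N^{2}$, and hence $t(C_N^2)=\tfrac1N\cdot N^{2}\cdot F_N^{2}=NF_N^{2}$.

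The only genuinely delicate point is the parity bookkeeping in the second product: the scalar $\prod_{j=1}^{N-1}\omega^{-j}$ and the sign $(-1)^{N-1}$ picked up when each factor $\omega^{j}+\text{const}$ is rewritten as $\text{const}'-\omega^{j}$ both depend on whether $N$ is even or odd, and one must check they cancel so that the final answer $NF_N^{2}$ is parity-free, consistently with the single formula of Theorem~\ref{main}. If one prefers to stay within the purely combinatorial framework of this paper and avoid eigenvalues, an alternative is to use the involution $\ell\mapsto N-\ell$: the reduced Laplacian $L'$ commutes with its permutation matrix, so $t(C_N^2)=\det L'=\det(L'|_{V_+})\det(L'|_{V_-})$, where $V_{\pm}$ are the corresponding $\pm1$-eigenspaces. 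In the natural basis, $L'|_{V_+}$ is precisely the (un-symmetrized) matrix $H_N$, whose determinant equals $\det D_N=F_N$ by Theorem~\ref{decomposition}, up to the harmless factor $2$ when $N$ is even; and $L'|_{V_-}$ is a corner-modified pentadiagonal matrix whose determinant should come out to $NF_N$ (respectively $\tfrac{N}{2}F_N$). On that route the hard part is producing and verifying a $\widetilde W\widetilde D\,{}^t\widetilde W$-type factorization of the $V_-$-block, which is exactly where the extra factor $N$ must originate.
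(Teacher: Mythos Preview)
The paper does not prove Theorem~\ref{Kleitman} at all: it is quoted as a result of Kleitman and Golden \cite{Kleitman} and used as a black box. So there is no ``paper's own proof'' to compare against here.

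Your primary argument via the Matrix--Tree theorem is correct. The factorization $\mu_j=(2-y_j)(3+y_j)$, the evaluation $\prod_{j=1}^{N-1}(2-y_j)=N^{2}$, and the Lucas-number computation $\prod_{j=1}^{N-1}(3+y_j)=F_N^{2}$ all check out. Concretely, from $\prod_{j=1}^{N-1}(z+\omega^{j})=\dfrac{z^{N}-(-1)^{N}}{z+1}$ one gets
\[
\prod_{j=1}^{N-1}(\phi^{2}+\omega^{j})(\phi^{-2}+\omega^{j})
=\frac{(\phi^{2N}-(-1)^{N})(\phi^{-2N}-(-1)^{N})}{(\phi^{2}+1)(\phi^{-2}+1)}
=\frac{2-(-1)^{N}L_{2N}}{5}=(-1)^{N-1}F_N^{2},
\]
and the scalar $\prod_{j=1}^{N-1}\omega^{-j}=(-1)^{N-1}$ cancels the remaining sign, exactly as you say. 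Note that this spectral route is precisely the kind of argument the paper advertises avoiding elsewhere, but since the paper simply imports the result, that is not a defect of your proof.

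Your second paragraph, sketching a purely combinatorial alternative via the $\pm1$-eigenspaces of the involution $\ell\mapsto N-\ell$, is not a proof as it stands. You correctly observe that Theorem~\ref{decomposition} already yields $\det H_N=\det D_N=F_N$ for the symmetric block (with a factor $2$ when $N$ is even, coming from the last-row rescaling), but the claim that the antisymmetric block has determinant $NF_N$ is asserted, not shown, and you explicitly flag the missing $\widetilde W\widetilde D\,{}^{t}\widetilde W$ factorization as ``the hard part.'' If you want this alternative to be a genuine proof, that block must actually be analysed; otherwise treat that paragraph as a remark, not as part of the argument.
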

Combining Theorems \ref{main}, \ref{Nash}, \ref{Kirchhoff}, and \ref{Kleitman},
we have the following.
\begin{prop}
\begin{align*}
t(C_N^2;0,l)=\frac{F_{N}}{5}\left(l(N-l)F_N+2NF_{l}F_{N-l}\right).
\end{align*}
\end{prop}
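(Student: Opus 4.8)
The plan is to combine the three ingredients that have just been assembled in the excerpt: the closed form for $h_N(0,l)$ from Theorem~\ref{main}, the Nash--Williams identity (Theorem~\ref{Nash}) together with the symmetry $h_N(0,l)=h_N(l,0)$ forced by the dihedral action on $C_N^2$, the Kirchhoff formula (Theorem~\ref{Kirchhoff}) relating effective resistance to graph complexity, and the Kleitman--Golden count $t(C_N^2)=NF_N^2$ (Theorem~\ref{Kleitman}). First I would write $r(C_N^2;0,l)=\dfrac{h(C_N^2;0,l)+h(C_N^2;l,0)}{2|E(C_N^2)|}$. Since $|E(C_N^2)|=2N$ and, by the $D_N$-symmetry, $h(C_N^2;l,0)=h(C_N^2;0,N-l)=h(C_N^2;0,l)$, this collapses to $r(C_N^2;0,l)=\dfrac{h_N(0,l)}{2N}$.

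Next I would substitute the formula of Theorem~\ref{main},
\[
r(C_N^2;0,l)=\frac{1}{2N}\cdot\frac{2}{5}\left(l(N-l)+2N\frac{F_lF_{N-l}}{F_N}\right)
=\frac{1}{5N}\left(l(N-l)+\frac{2NF_lF_{N-l}}{F_N}\right)
=\frac{l(N-l)F_N+2NF_lF_{N-l}}{5NF_N}.
\]
Then I would apply Kirchhoff's theorem in the form $t(C_N^2;0,l)=r(C_N^2;0,l)\cdot t(C_N^2)$ and plug in $t(C_N^2)=NF_N^2$ from Theorem~\ref{Kleitman}:
\[
t(C_N^2;0,l)=\frac{l(N-l)F_N+2NF_lF_{N-l}}{5NF_N}\cdot NF_N^2
=\frac{F_N}{5}\left(l(N-l)F_N+2NF_lF_{N-l}\right),
\]
which is exactly the claimed expression.

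This is a short derivation with essentially no genuine obstacle — every nontrivial fact it rests on has already been proved or cited in the excerpt. The one point that deserves an explicit word is the simplification $h(C_N^2;l,0)=h(C_N^2;0,l)$: this is where the vertex-transitivity and the reflection symmetry of $C_N^2$ (the $D_N$-action noted in the introduction, giving $h_N(0,l)=h_N(0,N-l)$ and $h_N(0,l)=h_N(k,k+l)=h_N(k+l,k)$) are used, and it is what lets the two hitting-time terms in Nash--Williams merge into $2h_N(0,l)$ rather than remaining an asymmetric sum. A secondary cosmetic check is that the result is visibly an integer: since $r(C_N^2;0,l)=t(C_N^2;0,l)/t(C_N^2)$ with $t(C_N^2)=NF_N^2$, the formula asserts $5NF_N^2 \mid NF_N\big(l(N-l)F_N+2NF_lF_{N-l}\big)$, i.e. $5\mid l(N-l)F_N+2NF_lF_{N-l}$; this follows from identity~(\ref{fib3.2}) modulo arithmetic with $F_N$, but since $t(G;x,y)$ is a spanning-tree count it is integral a priori and no separate verification is needed. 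So the proof is essentially a two-line substitution, and I would present it as such.
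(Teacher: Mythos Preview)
Your proof is correct and follows exactly the same route as the paper: combine Kirchhoff's formula $t(C_N^2;0,l)=r(C_N^2;0,l)\,t(C_N^2)$ with the Kleitman--Golden count $t(C_N^2)=NF_N^2$, use Nash--Williams together with the dihedral symmetry to get $r(C_N^2;0,l)=h_N(0,l)/(2N)$, and substitute Theorem~\ref{main}. The paper compresses this into a single displayed line, but the logic is identical.
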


\begin{proof}
\begin{align*}
t(C_N^2;0,l)&=r(C_N^2;0,l)t(C_N^2)
=\frac{h_N(0,l)F_N^2}{2}
=\frac{F_N^2}{5}\left(l(N-l)+2N\frac{F_{l}F_{N-l}}{F_N}\right)\\
&=\frac{F_{N}}{5}\left(l(N-l)F_N+2NF_{l}F_{N-l}\right). 
\end{align*}
\end{proof}

\section{Asymptotic Behavior}

Theorem $1.2$ is useful for calculating the asymptotic behavior of HT's of simple
random walks on $C_N^2$. 
Let $\phi=\frac{1+\sqrt5}{2}$. 

\begin{lem}[\cite{Simon}]
\begin{align*}
\lim_{n \to \infty} \frac{F_n}{F_{n-1}}=\phi. 
\end{align*}
\label{Wall}
\end{lem}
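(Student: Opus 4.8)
The plan is to reduce the limit to a fixed-point computation for the recursion satisfied by the ratios $r_n := F_n/F_{n-1}$, after first establishing—by an independent argument—that the limit exists at all. From the Fibonacci recurrence $F_n = F_{n-1}+F_{n-2}$ one gets, for $n\ge 3$, the relation $r_n = 1 + 1/r_{n-1}$, and since $r_2 = 1$ a one-line induction gives $1\le r_n\le 2$ for all $n\ge 2$; in particular any limit $L$ of $(r_n)$ satisfies $L\ge 1>0$. Passing to the limit in $r_n = 1+1/r_{n-1}$ then yields $L = 1+1/L$, i.e. $L^2-L-1=0$, whose only nonnegative root is $\phi=\frac{1+\sqrt5}{2}$; this identifies the limit as soon as we know it exists.

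To see that $(r_n)$ actually converges, I would use Cassini's identity \eqref{t1}: dividing $F_{n+1}F_{n-1}-F_n^2=(-1)^n$ by $F_nF_{n-1}$ gives $r_{n+1}-r_n = \dfrac{(-1)^n}{F_nF_{n-1}}$. Since $F_n$ grows at least geometrically (e.g.\ $F_n\ge (3/2)^{\,n-2}$ for $n\ge 2$, a trivial induction from the recurrence), the series $\sum_n \frac{1}{F_nF_{n-1}}$ converges, so $\sum_n |r_{n+1}-r_n|<\infty$; the telescoping identity $r_{N+1}=r_2+\sum_{n=2}^{N}(r_{n+1}-r_n)$ then shows $(r_n)$ is Cauchy, hence convergent. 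Combining this with the previous paragraph gives $\lim_n r_n = \phi$, which is the claim.

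An alternative, even shorter route is via Binet's formula $F_n = \frac{1}{\sqrt5}\bigl(\phi^n-\psi^n\bigr)$ with $\psi = \frac{1-\sqrt5}{2}=1-\phi$: then $\frac{F_n}{F_{n-1}} = \phi\cdot\frac{1-(\psi/\phi)^n}{1-(\psi/\phi)^{n-1}}$, and since $|\psi/\phi|<1$ both $(\psi/\phi)^n$ and $(\psi/\phi)^{n-1}$ tend to $0$, so the ratio tends to $\phi$. I would probably present this as the main argument because it is immediate, and keep the recurrence-plus-Cassini derivation as a ``bare hands'' alternative that stays entirely within the toolkit of Section~2.

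The only genuine obstacle is the one flagged above: the recurrence $r_n = 1+1/r_{n-1}$ by itself does not prove convergence—it only pins down the value conditionally—so some quantitative input is required to close the gap, namely either the geometric growth of $F_n$ fed into Cassini's identity, or Binet's closed form directly. Everything else is routine.
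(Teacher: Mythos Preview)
Your argument is correct; both the Cassini-plus-telescoping route and the Binet-formula route are standard, complete proofs of the claim, and your handling of the existence-of-limit issue is careful.

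There is nothing to compare against, however: the paper does not prove this lemma at all. It is stated with a citation to \cite{Simon} and used as a black box in the asymptotic computations that follow. So you have supplied a proof where the paper simply appeals to the literature.
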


\begin{thm}
\begin{align*}
\lim_{N \to \infty}\frac{h_N(0,l)-\frac{2}{5}l(N-l)}{N}=\frac{4}{5\sqrt5} \qquad(l=0, 1, 2, \cdots). 
\end{align*}
\end{thm}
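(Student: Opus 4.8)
The plan is to collapse the statement to a single Fibonacci-ratio limit and then evaluate that limit in one pass. First I would insert the closed form of Theorem~\ref{main} into the numerator. Since
\[
h_N(0,l)=\frac{2}{5}\Bigl(l(N-l)+2N\frac{F_lF_{N-l}}{F_N}\Bigr),
\]
the quadratic term cancels exactly against the subtracted $\frac{2}{5}l(N-l)$, and after dividing by $N$ one is left with the clean identity
\[
\frac{h_N(0,l)-\frac{2}{5}l(N-l)}{N}=\frac{4}{5}\cdot\frac{F_lF_{N-l}}{F_N}.
\]
Hence, for each fixed $l$, the theorem is equivalent to the single limit $\lim_{N\to\infty}\frac{F_lF_{N-l}}{F_N}=\frac{1}{\sqrt{5}}$, in which only $N$ varies.

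Next I would evaluate this ratio directly by Binet's formula $F_k=\frac{\phi^k-\psi^k}{\sqrt{5}}$, where $\psi=\frac{1-\sqrt{5}}{2}$ satisfies $|\psi|<1$ and $\phi\psi=-1$ (this is the quantitative form of the ratio limit recorded in Lemma~\ref{Wall}). Substituting for all three Fibonacci numbers at once gives
\[
\frac{F_lF_{N-l}}{F_N}=\frac{1}{\sqrt{5}}\cdot\frac{(\phi^l-\psi^l)(\phi^{N-l}-\psi^{N-l})}{\phi^N-\psi^N}.
\]
Dividing numerator and denominator by $\phi^N$ and sending $N\to\infty$, the denominator $\phi^N-\psi^N$ is asymptotic to $\phi^N$ because $|\psi|<1$, while in the numerator the dominant contribution is the product $\phi^l\cdot\phi^{N-l}=\phi^N$ of the two leading Binet terms. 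Thus the bracketed quotient tends to $1$, the prefactor $\frac{1}{\sqrt{5}}$ survives, and multiplying by $\frac{4}{5}$ produces the asserted value $\frac{4}{5\sqrt{5}}$.

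The step I expect to be most delicate is the bookkeeping inside the expanded numerator of the Binet substitution: the product $(\phi^l-\psi^l)(\phi^{N-l}-\psi^{N-l})$ splits into four terms, and one must identify carefully which of them are dominated by $\phi^N$ as $N\to\infty$ and which furnish the surviving leading balance $\phi^l\phi^{N-l}=\phi^N$. The relations $\phi\psi=-1$ and $|\psi|<1$ are precisely what control these contributions, so once the leading exponential balance is isolated the argument is short and requires no spectral input, in keeping with the combinatorial spirit of the paper; the remaining manipulations are entirely routine.
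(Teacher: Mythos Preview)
Your proposal is correct and follows essentially the same route as the paper: substitute the closed form from Theorem~\ref{main}, cancel the quadratic term to reduce the question to $\lim_{N\to\infty}F_lF_{N-l}/F_N$, and evaluate that ratio via the Binet asymptotics underlying Lemma~\ref{Wall}. The paper packages the asymptotic as $\frac{F_lF_{N-l}}{F_N}=\frac{1}{\sqrt5}\bigl(\phi^l\phi^{N-l}/\phi^N+O(1/N)\bigr)$ in one line, while you expand the four Binet cross-terms explicitly, but the substance is identical.
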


\begin{proof}
By Theorem \ref{main} and Lemma \ref{Wall}, we have
\begin{align*}
h_N(0,l)
&=\frac{2}{5}\left(l(N-l)+2N\frac{F_l \cdot F_{N-l}}{F_N} \right) \notag \\
&=\frac{2}{5}\left(l(N-l)+2N\frac{1}{\sqrt5} \left(\frac{\phi^l \cdot \phi^{N-l}}{\phi^N}+O\left(\frac{1}{N}\right)\right)\right) \notag \\
&=\frac{2}{5}\left(l(N-l)+\frac{2N}{\sqrt5}\left(1+O\left(\frac{1}{N}\right)\right)\right). 
\end{align*}
Then,
\begin{align*}
\lim_{N \to \infty}\frac{h_N(0,l)-\frac{2}{5}l(N-l)}{N}=\lim_{N \to \infty}\frac{4}{5\sqrt5}\left(1+O\left(\frac{1}{N}\right)\right)=\frac{4}{5\sqrt5}.
\end{align*}
\end{proof}

\begin{thm}
\begin{align*}
\lim_{\substack{N \to \infty \\ \frac{l}{N} \to x}}\frac{h_N(0,l)}{N^2}=\frac{2}{5}x(1-x). 
\end{align*}
\end{thm}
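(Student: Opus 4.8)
The plan is to read the limit directly off the closed formula in Theorem~\ref{main}. Dividing by $N^2$ gives
\[
\frac{h_N(0,l)}{N^2}=\frac{2}{5}\left(\frac{l}{N}\cdot\frac{N-l}{N}+\frac{2}{N}\cdot\frac{F_lF_{N-l}}{F_N}\right),
\]
and under the hypothesis $l/N\to x$ the first summand inside the parentheses converges to $x(1-x)$. So the whole problem reduces to showing that the Fibonacci term $\frac{1}{N}\cdot\frac{F_lF_{N-l}}{F_N}$ tends to $0$ as $N\to\infty$.

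For this I would first establish the uniform bound $F_lF_{N-l}\le F_N$, valid for all $0\le l\le N$. For $1\le l\le N-1$ this follows from the standard addition formula $F_N=F_{l+1}F_{N-l}+F_lF_{N-l-1}$ together with $F_{l+1}\ge F_l$: indeed $F_N\ge F_l(F_{N-l}+F_{N-l-1})=F_lF_{N-l+1}\ge F_lF_{N-l}$. The endpoint cases $l\in\{0,N\}$ are immediate from $F_0=0$. Hence $0\le\frac{1}{N}\cdot\frac{F_lF_{N-l}}{F_N}\le\frac{1}{N}\to0$, and combining with the first paragraph yields the claimed value $\frac{2}{5}x(1-x)$.

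Alternatively one could invoke Binet's formula, or Lemma~\ref{Wall}, to observe that the ratio $F_lF_{N-l}/F_N$ stays bounded (it tends to $1/\sqrt5$ when both $l$ and $N-l$ go to infinity, and to $F_l\phi^{-l}$ when $l$ stays bounded), but that forces a split into cases; the elementary inequality above covers every regime at once. The only point needing any care --- the \emph{main obstacle}, though a mild one --- is precisely that the estimate on $F_lF_{N-l}/F_N$ must be uniform in $l$, since a priori $l$ may grow with $N$ at any rate between $o(N)$ and order $N$; the single inequality $F_lF_{N-l}\le F_N$ dispatches all such rates simultaneously, which is why I would route the argument through it rather than through an asymptotic equivalence.
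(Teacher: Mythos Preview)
Your proof is correct and follows the same overall structure as the paper's --- divide the closed formula from Theorem~\ref{main} by $N^2$, let the polynomial part tend to $x(1-x)$, and kill the Fibonacci term --- but you handle that last step differently. The paper reuses the Binet-based estimate from the preceding theorem, writing $\frac{F_lF_{N-l}}{F_N}=\frac{1}{\sqrt5}\bigl(1+O(1/N)\bigr)$ and then dividing by $N$; this is quick but, as you implicitly note, the stated error term is not literally uniform over all $l$ (for bounded $l$ the ratio converges to $F_l\phi^{-l}$, not $1/\sqrt5$), so some case analysis is tacitly swept under the rug. Your route via the inequality $F_lF_{N-l}\le F_N$, derived from the addition formula $F_N=F_{l+1}F_{N-l}+F_lF_{N-l-1}$, sidesteps Binet entirely and gives a clean bound valid for every $0\le l\le N$ at once. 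The paper's approach buys a sharper asymptotic constant (the $1/\sqrt5$) when it applies, which was useful for the previous theorem; your approach buys uniformity and avoids any appeal to Lemma~\ref{Wall}, which is all that is needed here.
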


\begin{proof}
\begin{align*}
\lim_{\substack{N \to \infty \\ \frac{l}{N} \to x}}\frac{h_N(0,l)}{N^2}
&=\lim_{\substack{N \to \infty \\ \frac{l}{N} \to x}}\frac{2}{5}\left(\frac{l}{N}\left(1-\frac{l}{N}\right)+\frac{2}{N} \cdot \frac{1}{\sqrt5}\left(1+O\left(\frac{1}{N}\right)\right)\right)\\
&=\frac{2}{5}x(1-x). 
\end{align*}
\end{proof}


\end{document}